\title{Highly nonrepetitive sequences: winning strategies from the Local Lemma}
\author{Wesley Pegden\footnote{
Department of Mathematics, 
Rutgers University (New Brunswick), 
110 Frelinghuysen Rd.,
Piscataway, NJ 08854-8019. 
Email: pegden@math.rutgers.edu}
}
\date{September 10, 2010}
\newcommand{\diffblock}[1]{#1}
\newcommand{\pp}{{\rm P}}
\newcommand{\aaa}{{\cal A}}
\newcommand{\ccc}{{\cal C}}
\newcommand{\bbb}{{\cal B}}
\newcommand{\sbs}{\subset}
\newcommand{\abs}[1]{\lvert #1 \rvert}
\newcommand{\flr}[1]{\left\lfloor #1 \right\rfloor}
\newcommand{\clg}[1]{\lceil #1 \rceil}
\newcommand{\st}{\,\vrule\,}
\newcommand{\Z}{\mathbb{Z}}
\newcommand{\N}{\mathbb{N}}
\newtheorem{theorem}{Theorem}[section]
\newenvironment{theoremR}[1]
{\def\repthmref{#1}\theoremRinner}{\endtheoremRinner}
\newtheorem{conjecture}[theorem]{Conjecture}
\newtheorem{observ}[theorem]{Observation}
\newcommand{\comments}[1]{}
\theoremstyle{definition}
\newtheorem{q}{}
\theoremstyle{remark}
\newcommand{\stm}{\setminus}
\newcommand{\ep}{\varepsilon}
\newcommand{\eps}{\epsilon}
\begin{document}

\maketitle

\begin{abstract}
  We prove game-theoretic versions of several classical results on nonrepetitive sequences, showing the existence of winning strategies using an extension of the Lov\'asz Local Lemma which can dramatically reduce the number of edges needed in a dependency graph when there is an ordering underlying the significant dependencies of events.  This appears to represent the first successful application of a Local Lemma to games.
\end{abstract}

\section{Introduction}
In 1906, Thue showed the existence of infinite ternary square-free sequences---sequences without any adjacent identical blocks\cite{t1,t2}.  Note that there can be no binary sequence even just of length 4 with this property: after 010 or 101, there is no continuation which does not introduce a repetition.  To obtain Thue's remarkable ternary sequence, recursively define  binary sequences $T_i$ where $T_0=0$, and $T_{i+1}$ is constructed by replacing each $0$ in $T_{i}$ with the string $01$ and each 1 with the string $10$.  Thus $T_1=01$, $T_2=0110$, etc.  Each $T_i$ is the initial segment of $T_{i+1}$, so the limit $T_\infty =\lim_{i\to \infty} T_i$ is a well-defined binary sequence
\[
01 10 1001 10010110 1001011001101001 10010110011010010110100110010110\dots.
\]
Thue observed that $T_\infty$ is \emph{overlap-free}---no two overlapping intervals are identical---since this property is preserved under the replacement operation described above.  In particular, this implies that $T_\infty$ is cube-free: it contains no 3 consecutive identical blocks.  Construct the sequence
\[
2102012101202102012021012102012\dots
\]
by counting the number of 1's between each consecutive pair of 0's in $T_\infty$.  Since $T_\infty$ is cube-free, this is a ternary sequence.  The fact that $T_\infty$ is overlap-free implies that this sequence is square-free.

 This concept has many interesting generalizations and suggests several directions of research; see  \cite{cprobs} and \cite{survey} for overviews of the area and discussions of open problems.  One such direction concerns requiring that any identical blocks be far apart, rather than simply nonadjacent.  This is the subject of the following conjecture of Dejean\cite{dej}:
\begin{conjecture}[Dejean (1972)]
  For every $c\geq 5$, there is an infinite $c$-ary sequence where (for all $n$) any two identical blocks of length $n$ are separated by at least $(c-2)n$ terms.
\end{conjecture}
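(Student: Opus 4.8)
The plan is to set the problem up as a random colouring and run the (weighted) Local Lemma---the natural line of attack given the shape of the statement and the tools of this paper---and then to see how far that actually gets. First I would colour $\N$ (or $\Z$, and restrict) uniformly and independently with the $c$ symbols, and introduce, for every length $n\ge 1$, every gap $d$ with $1\le d<(c-1)n$ (reading ``separated by $s$ terms'' as $s$ intervening symbols; the other convention merely replaces $c-1$ by $c-2$), and every start $a$, the bad event $A_{a,d,n}$ that the length-$n$ blocks starting at $a$ and at $a+d$ agree. A one-line computation gives $\pr(A_{a,d,n})=c^{-n}$, even when the blocks overlap (there the event says that a window of length $n+d$ is $d$-periodic, so $c^{d}$ of the $c^{n+d}$ patterns qualify), and $A_{a,d,n}$ is determined by the symbols on $[a,a+d+n)$, hence independent of any $A_{a',d',n'}$ whose interval $[a',a'+d'+n')$ is disjoint from it.

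The next step is to apply the general Local Lemma with weights depending only on the length, $x_{a,d,n}=\mu^{n}$ for a $\mu\in(0,1)$ to be optimised, so that everything reduces to
\[
c^{-n}\ \le\ \mu^{n}\prod_{(a',d',n')\,\sim\,(a,d,n)}\bigl(1-\mu^{n'}\bigr)\qquad(n\ge 1).
\]
Since the interval of $A_{a,d,n}$ has length $d+n<cn$, at most $O\!\bigl(c^{2}n'(n+n')\bigr)$ events with a given length parameter $n'$ can depend on it (fewer than $(c-1)n'$ gaps $d'$, each with $O(cn+cn')$ starts $a'$), so the product is at least $\exp\!\bigl(-O(c^{2})\sum_{n'}n'(n+n')\mu^{n'}\bigr)$, and one is left with an elementary inequality relating $\mu$, $c$ and $n$. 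For the \emph{game} version one would do the same, but restrict the dependency graph using the ordered variant of the Local Lemma advertised in the abstract, since the adversary's moves are made in time order.

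The hard part is bridging the gap that remains, and I do not expect the probabilistic argument to close it. At $n=1$ the inequality above asks, in essence, for $c^{-1}\le\mu\,e^{-\Theta(c^{2}\mu)}$, whose right-hand side is at most $\Theta(c^{-2})$---so it fails for large $c$, and this is not an artefact of crude counting: at length $1$ Dejean's condition demands that every $c-2$ consecutive symbols be distinct, a near-optimal use of the alphabet against which a random colouring already has $\Theta(c)$ violations per window, exactly the regime in which the Local Lemma has no slack. If one instead only forbids gaps $d<Kn$, the dependency counts scale with $K$ rather than $c$, and the same calculation goes through once $c\gtrsim K^{2}$; so the Local Lemma, even its ordered form, yields at best a separation of order $\sqrt{c}\cdot n$, far short of the sought $(c-2)n$. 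Obtaining the sharp linear gap needs a structured rather than a random object: verify the statement by computer for the small values of $c$, and for large $c$ exhibit an explicit family of (near-)uniform morphisms whose fixed points avoid every factor of exponent exceeding $\tfrac{c-1}{c-2}$, with correctness reduced to a finite check on short factors. Finding morphisms that work simultaneously for an unbounded range of $c$ while still admitting such a finite certificate is the genuine difficulty---the reason the conjecture of \cite{dej} resisted for decades---and it lies outside what a Local Lemma argument can provide.
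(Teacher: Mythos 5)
You have not given a proof, and you are candid about that; but it is worth being explicit about how this squares with the paper. The paper does not prove Dejean's conjecture either. It states the conjecture, observes that it is now a theorem, and cites the work of others: the cases $c\le 14$ and $c\ge 30$ were settled by Pansiot, Ollagnier, Mohammad-Noori--Currie, Currie--Rampersad, and Carpi, and the remaining cases were announced by Currie--Rampersad and (independently) Rao. So there is no ``paper's own proof'' to compare against; the statement appears only as background motivating the game-theoretic results that follow.

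That said, the substantive content of your write-up---an explanation of why the Local Lemma, even the Lefthanded variant developed in this paper, cannot reach Dejean's threshold---is correct and worth spelling out. The obstruction you identify at $n=1$ is real: Dejean's condition at length $1$ forces any $c-1$ consecutive symbols to be pairwise distinct, which is a near-perfect-packing requirement, and a uniformly random colouring violates it a constant fraction of the time per window. The Local Lemma needs the failure probability of each bad event to beat the local dependency count, and here both scale polynomially in $c$ with the wrong exponents, so there is no slack to exploit; your heuristic that the probabilistic method caps out around separation $\Theta(\sqrt c)\, n$ rather than $(c-2)n$ matches the known state of affairs (Beck-type arguments give exponential separation for \emph{long} blocks, not linear separation for \emph{all} blocks). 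Your sketch of the actual route---explicit morphisms, finiteness reductions, computer verification for small alphabets, and the difficulty of a uniform construction across all $c$---is an accurate high-level description of how the conjecture was ultimately closed, but it is a pointer to the literature rather than an argument, and you should not present it as more. In short: you correctly diagnose that this statement lies outside the reach of the paper's technique, which is consistent with the paper treating it as cited background rather than as a result it establishes.
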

\noindent The conjecture is now proved.  After being confirmed for $c\leq 14$ \cite{d4,d5t11,d12t14} and $c\geq 30$ \cite{d30t32,d33up}, proofs of the remaining cases were announced by both Currie and Rampersad \cite{crD}, and Rao \cite{rD}.   If one is concerned only with large identical blocks, a much stronger restriction can be placed on the distance between identical pairs, even in a binary sequence:
\begin{theorem}[J. Beck (1981)]
  For any $\ep>0$, there is some $N_\ep$ and an infinite 0-1 sequence such that any two identical blocks of length $n>N_\ep$ are at distance greater than $(2-\ep)^n$.\qed
\label{t.beck}
\end{theorem}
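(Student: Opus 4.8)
\emph{Proof plan.} The approach is a compactness reduction followed by the general (asymmetric) Lov\'asz Local Lemma. We may assume $0<\ep<1$, since larger $\ep$ gives a weaker statement.

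\emph{Reduction.} By K\"onig's lemma it suffices, for every $m$, to produce a string $w\in\{0,1\}^m$ that is \emph{good}, meaning: for all $n>N_\ep$ and all $i<j$ with $j+n-1\le m$, if the length-$n$ blocks of $w$ starting at $i$ and $j$ coincide then $j-i>(2-\ep)^n$ (here $N_\ep$ is a large constant to be fixed). Indeed, the set of good finite strings is prefix-closed and finitely branching, and once the Local Lemma is applied it contains strings of every length, while any offending repetition occupies finitely many positions; so an infinite branch yields the desired sequence. Put $n_0(d):=\max\{N_\ep+1,\ \lceil\log_{2-\ep}d\rceil\}$ for $d\ge1$. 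Choose $w_1,\dots,w_m$ independently and uniformly from $\{0,1\}$, and for each pair $i<j$ with $d:=j-i$ and $j+n_0(d)-1\le m$ let $A_{i,j}$ be the event that the length-$n_0(d)$ blocks at $i$ and $j$ coincide, so $\pr[A_{i,j}]=2^{-n_0(d)}$. One event per pair suffices: if some length-$n$ blocks at $i<j$ coincide with $n>N_\ep$ and $j-i\le(2-\ep)^n$, then $n\ge n_0(d)$, so the length-$n_0(d)$ blocks at $i,j$ (their prefixes) coincide too, i.e.\ $A_{i,j}$ occurs. Hence it is enough to choose $w$ avoiding every $A_{i,j}$.

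\emph{Local Lemma setup.} Since $\pr[A_{i,j}]$ varies exponentially with $d$, we assign weights $x_{A_{i,j}}:=\theta^{n_0(d)}$ with $\theta$ fixed in the interval $\bigl(\tfrac12,\ \tfrac1{2-\ep}\bigr)$, which is nonempty exactly because $\ep>0$ — this is where the hypothesis enters. The structural key is that $A_{i,j}$ reads only the (at most) $2n_0(d)$ bits in the two length-$n_0(d)$ windows at $i$ and $j$, never the $\approx d$ bits between them. Consequently, for each fixed distance $d'$, at most $4\bigl(n_0(d)+n_0(d')\bigr)$ events $A_{i',j'}$ with $j'-i'=d'$ have a window meeting a window of $A_{i,j}$ (four window-pairs, each confining $i'$ to an interval of that length). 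Therefore
\[
\sum_{B\sim A_{i,j}}x_B\ \le\ 4\,n_0(d)\sum_{d'\ge1}\theta^{n_0(d')}\ +\ 4\sum_{d'\ge1}n_0(d')\,\theta^{n_0(d')}\ =:\ 4\,n_0(d)\,\Sigma_1+4\,\Sigma_2 .
\]
Because $\theta<\tfrac1{2-\ep}$ we have $\theta^{n_0(d')}\le(d')^{-\mu}$ with $\mu:=\log_{2-\ep}(1/\theta)>1$ once $d'$ passes the threshold $(2-\ep)^{N_\ep+1}$ (below it, $\theta^{n_0(d')}=\theta^{N_\ep+1}$), so $\Sigma_1,\Sigma_2<\infty$; moreover $\Sigma_1,\Sigma_2\to0$ as $N_\ep\to\infty$, the small-$d'$ part contributing $O\bigl(N_\ep((2-\ep)\theta)^{N_\ep}\bigr)$ and the remainder being the tail of a convergent series.

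\emph{Verification and the main obstacle.} With $\theta>\tfrac12$ we have $\ln(2\theta)>0$, and since $1-t\ge e^{-2t}$ on $[0,\tfrac12]$, the Local Lemma condition $\pr[A_{i,j}]\le x_{A_{i,j}}\prod_{B\sim A_{i,j}}(1-x_B)$ follows once $2\sum_{B\sim A_{i,j}}x_B\le n_0(d)\ln(2\theta)$; dividing by $n_0(d)\ge N_\ep+1$, this is implied by $8\Sigma_1+8\Sigma_2/(N_\ep+1)\le\ln(2\theta)$, which holds for all large $N_\ep$. The Local Lemma then produces a good string of each length $m$, and compactness finishes the proof. The substantive point — and the reason the plain symmetric Local Lemma fails — is precisely this balancing: a long repetition has a footprint of length $\approx d$, yet since its event only inspects $O(\log d)$ bits, its neighbourhood sum grows merely \emph{linearly} in $n_0(d)$, just slowly enough to be dominated by the right-hand side (also linear in $n_0(d)$, with fixed positive slope $\ln(2\theta)$). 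Pinning $\theta$ to $\bigl(\tfrac12,\tfrac1{2-\ep}\bigr)$ and showing $\Sigma_1,\Sigma_2\to0$ as $N_\ep\to\infty$ is the heart of the matter.
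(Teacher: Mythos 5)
Your proof is correct, and it follows the broad outline the paper attributes to Beck --- random finite sequence, asymmetric Local Lemma, then compactness --- but it differs in two worthwhile ways from the proof the paper itself gives for the closely analogous Theorem~\ref{t.feasy} (the paper cites Beck's theorem without proof, but explicitly calls the Section~\ref{s.easy} argument ``very similar to Beck's proof''). First, the paper indexes bad events by all triples $(k,\ell,n)$ with $n>N_\ep$ and $\ell-k<f(n)$, whereas you keep only one event per pair $(i,j)$, at the minimal relevant length $n_0(d)$ with $d=j-i$, observing that a long coincidence at distance $d$ forces the length-$n_0(d)$ prefix coincidence. This collapses the dependency graph substantially and makes the neighbourhood count grow only linearly in $n_0(d)$, which is precisely what lets your geometric weights $x_{A_{i,j}}=\theta^{n_0(d)}$ close the argument. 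Second, the paper's weights are $x_{k,\ell,n}=1/(f(n)n^3)$ (exponential with a polynomial correction); your purely geometric choice pinned to $\theta\in(\tfrac12,\tfrac1{2-\ep})$ isolates cleanly where the $\ep>0$ hypothesis is used and turns the verification into ``$\Sigma_1,\Sigma_2\to0$ as $N_\ep\to\infty$.'' The computations you sketch --- $\pr[A_{i,j}]=2^{-n_0(d)}$ even when the two windows overlap (since the constraints partition the $n_0(d)+d$ involved bits into $d$ classes), the $4(n_0(d)+n_0(d'))$ neighbourhood bound per distance $d'$, and the bound $\Sigma_1,\Sigma_2=O\bigl(N_\ep((2-\ep)\theta)^{N_\ep}\bigr)+\text{tail of }\sum (d')^{-\mu}$ with $\mu>1$ --- all check out. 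The one-event-per-pair decomposition is the genuinely different ingredient relative to the paper's treatment; it buys a sparser dependency graph and arguably a more transparent balancing of the Local Lemma inequality, at no cost in generality.
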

Theorem \ref{t.beck}, which is essentially best possible ($(2-\ep)^n$ cannot be replaced by $2^n$), was an early application of the `General' (or `Asymmetric') Lov\'asz Local Lemma; the existence of the relevant 0-1 sequence is proved by showing that the probability that a random finite sequence has the properties in question is positive, and then using compactness to show the existence of a suitable infinite sequence.  Note that no explicit construction of such a sequence is known.

While sequences that have nonrepetitive properties like those guaranteed by Beck's Theorem \ref{t.beck} may be very scarce, we will show in this paper that they are nevertheless very `attainable', in a certain sense (although we do not know how!).  Consider a game, the \emph{binary sequence game}, in which two players take turns choosing from the digits $\{0,1\}$ to form an unending binary sequence (the first digit is chosen by Player 1, the second by Player 2, the third by Player 1, \emph{etc.}).  
Our first result is an analog to Beck's theorem, showing that we can construct highly nonrepetitive sequences even when faced with an adversary.
\begin{theorem}
  For any $\ep>0$, there is an $N_\ep$ for which Player 1 has a strategy in the binary sequence game which ensures that any two identical blocks of length $n>N_\ep$ in the resulting sequence will lie at distance greater than $(2-\ep)^{n/2}$.
\label{t.2game}
\end{theorem}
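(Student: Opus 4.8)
The plan is to let Player~1 play \emph{at random}---on each of her turns choosing $0$ or $1$ by an independent fair coin flip---and to prove, via an ordered (``lefthanded'') version of the Lov\'asz Local Lemma, that against any fixed adversary Player~1's random play creates no long repetition with positive probability; a finite--game determinacy argument together with a compactness argument then converts this into a single deterministic winning strategy. So fix any strategy $\tau$ for Player~2 and label positions so that Player~1 occupies the odd ones. For every pair of intervals $I=\{a,\dots,a+n-1\}$ and $J=\{b,\dots,b+n-1\}$ with $n>N_\ep$, $b>a$, and $\mathrm{dist}(I,J)\le(2-\ep)^{n/2}$, let $A_{I,J}$ be the event that the two length-$n$ blocks occupying $I$ and $J$ are identical; the theorem holds as soon as Player~1 can avoid every $A_{I,J}$.

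The key probabilistic estimate is that $\pr(A_{I,J})\le 2^{-\lfloor n/2\rfloor}$, and, crucially, that this bound survives conditioning on anything happening ``to the left''. Indeed, among the indices $k\in\{0,\dots,n-1\}$ there are at least $\lfloor n/2\rfloor$ for which $b+k$ is an odd position; for each such $k$ the bit $x_{b+k}$ is a fresh fair coin of Player~1, and its partner $x_{a+k}$---which $A_{I,J}$ forces it to equal---sits strictly to its left, hence is already on the board when $x_{b+k}$ is played. Revealing the bits in time order and multiplying conditional probabilities then gives $\pr(A_{I,J}\mid\eee)\le 2^{-\lfloor n/2\rfloor}$ for every event $\eee$ measurable with respect to the bits strictly to the left of $\rho(I,J)$, the leftmost such position $b+k$; in particular this holds for $\eee=\bigwedge_{B\in S}\bar B$ whenever each $B\in S$ is determined by bits to the left of $\rho(I,J)$. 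Note that only half the bits of $J$ can be ``charged'' in this way---the others belong to Player~2, who may copy freely---which is exactly why the theorem carries $(2-\ep)^{n/2}$ rather than the $(2-\ep)^{n}$ of Beck's Theorem~\ref{t.beck}.

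Now order the events $A_{I,J}$ by the parameter $\rho(I,J)$ (ties broken arbitrarily), and let the lefthand neighborhood of $A_{I,J}$ consist of those $A_{I',J'}$ preceding it that are \emph{not} determined by the bits to the left of $\rho(I,J)$; the previous paragraph is then precisely the conditional-probability hypothesis of the ordered Local Lemma for this neighborhood. The point is that such an $A_{I',J'}$ of block-length $m$ must satisfy $\rho(I',J')\le\rho(I,J)\le\max(I'\cup J')$, which forces the right block $J'$ to contain the single point $\rho(I,J)$: there are only $O(m)$ choices for $J'$, and then $O((2-\ep)^{m/2})$ choices for $I'$, so the neighborhood contains only $O\bigl(m(2-\ep)^{m/2}\bigr)$ events of each block-length $m$---a bound \emph{independent of $n$ and of where $A_{I,J}$ lies}. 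Assigning weight $x_{A_{I,J}}=\mu^{n/2}$ for a fixed $\mu\in(\tfrac12,\tfrac1{2-\ep})$, one gets $\prod_{B}(1-x_B)\ge\exp\bigl(-O(1)\sum_m m((2-\ep)\mu)^{m/2}\bigr)=c_\ep>0$ since $(2-\ep)\mu<1$, and the required inequality $2^{-\lfloor n/2\rfloor}\le\mu^{n/2}c_\ep$ then holds for all $n>N_\ep$ because $2\mu>1$. This is where the difficulty concentrates, and where the ordinary asymmetric Local Lemma fails: $A_{I,J}$ shares a variable with $\approx(2-\ep)^{n/2}$ other bad events---the width of $I\cup J$---which would make the corresponding product exponentially small in $n$ and defeat the inequality. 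Replacing ``everything touching $I\cup J$'' by ``everything touching the single point $\rho(I,J)$'' is what makes the sum converge, so casting the repetition events into a linear order for which this works---in particular pinning down the right notion of which events ``come before'' which, and formulating and proving the ordered Local Lemma itself---is the crux.

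It remains to extract a \emph{deterministic} strategy. For each horizon $T$, the $T$-move version of the game (Player~1 trying to avoid every $A_{I,J}$ with $I,J\subseteq\{1,\dots,T\}$) is a finite game of perfect information, hence won by one of the players; if it were Player~2, his winning strategy would beat Player~1's random play with probability $1$, contradicting the positive lower bound from the Local Lemma (which applies since the bad events confined to $\{1,\dots,T\}$ form a finite set and the neighborhood bound above is uniform in $T$), so Player~1 has a winning strategy $\sigma_T$. The $\sigma_T$ live in the compact space of maps from finite histories to $\{0,1\}$; any accumulation point $\sigma$ is a strategy for the infinite game under which no $A_{I,J}$ can ever occur, because a bad event confined to $\{1,\dots,T^\ast\}$ would have been prevented by every $\sigma_T$ with $T$ large that agrees with $\sigma$ on the finitely many histories governing the first $T^\ast$ moves. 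This $\sigma$ is the desired strategy.
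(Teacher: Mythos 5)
Your overall scaffold---random play by Player~1 against a fixed adversary, the $2^{-\lfloor n/2\rfloor}$ conditional-probability estimate that survives conditioning on ``bits to the left,'' a Lefthanded Local Lemma, finite determinacy, and compactness---is exactly the paper's strategy, and the probabilistic estimate and the determinacy/compactness step are correct. The gap is in the Local Lemma setup itself.

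You order the bad events by $\rho(I,J)$ (the leftmost Player-1 position in $J$) and take the neighborhood of $A_{I,J}$ to be those $A_{I',J'}$ preceding it in this order that are not determined by bits left of $\rho(I,J)$; you deduce $\rho(I,J)\in J'$ and hence an $O(m)$ count of neighbors per block-length $m$, yielding a product bounded below by a constant $c_\ep$. But this $(\leq,\Gamma)$ does \emph{not} satisfy the ordering hypothesis of the Lefthanded Local Lemma (the paper's condition (\ref{l.Dtop})), which is essential to the induction: it requires that any $C$ not in $\Gamma(A)\cup\{A\}$ with $C\not>A$ satisfies $C\not>B$ for every $B\in\Gamma(A)$. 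Concretely, take $\rho(I,J)=11$; let $B$ have $J_B=[3,13]$ so $\rho(B)=3$, $J_B\ni 11$, hence $B\in\Gamma(A_{I,J})$; and let $C$ have $J_C=[5,9]$ so $\rho(C)=5$ and $\max J_C=9<11$, hence $C$ is determined by bits left of $11$ and lies outside $\Gamma(A_{I,J})$, with $C\not>A_{I,J}$. Then $\rho(C)=5>3=\rho(B)$, so $C>B$---a violation. The underlying issue is that ``determined by bits before $\rho$'' is not monotone in $\rho(I',J')$, so the putative neighborhood is not upward-closed among events below $A$, and the induction in the Lefthanded Lemma breaks when it tries to bound $\pp(\bigcap_{\bbb}\bar B\mid\bigcap_{\ccc}\bar C)$.

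The paper instead orders by the right endpoint $\max J$ and takes $\Gamma(A_{I,J})$ to consist of all events $A_{I',J'}$ with $\max J'\leq \max J$ and $J'\cap J\neq\emptyset$ (equivalently $\max J'\in J$). With this choice the complement is automatically downward-closed, so (\ref{l.Dtop}) holds, and the conditional-probability hypothesis still holds because every excluded $C\leq A$ lies entirely left of $J$. The price is that the neighbor count per block-length $m$ is $\Theta(n)$ (one choice of right endpoint of $J'$ per point of $J$, times the $O((2-\ep)^{m/2})$ choices of $I'$), not $O(m)$; the product is then $(1-\delta)^n$ rather than a constant $c_\ep$. This is still enough to close the argument, since $2^{-\lfloor n/2\rfloor}\leq \mu^{n/2}(1-\delta)^n$ for all large $n$ once $\bigl((2\mu)^{1/2}(1-\delta)\bigr)>1$, which is compatible with $\mu<1/(2-\ep)$ for $\delta$ small (i.e., $N_\ep$ large). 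So your proof can be repaired by switching the order parameter from $\rho(I,J)$ to $\max J$ and enlarging the neighborhood accordingly, but as written the claimed $O(m)$ count and constant $c_\ep$ rest on a neighborhood structure that the ordered Local Lemma does not actually accept.
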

Roughly speaking, Beck's Theorem \ref{t.beck} asserts that we can build a binary sequence where any large blocks are separated by exponentially large distance.  Theorem \ref{t.2game} asserts that we can build such a sequence even if an adversary gets to choose every other digit.  Of course, as we have surrendered control over `half' the sequence, it is not surprising that the base of the exponent is lower.  We will see shortly that $(2-\ep)^{\frac n 2}$ is best possible here.

Surprisingly, the proof of Theorem \ref{t.2game} is also probabilistic; it is based on an extension of the Local Lemma (Theorem \ref{t.ol}), and is many steps removed from any constructive argument.  We use our extension of the Local Lemma to show (nonconstructively) that Player 2 cannot have a winning strategy in a finite version of the game; this implies (nonconstructively) that Player 1 has a winning strategy for any finite version of the game; this, finally, implies via compactness (nonconstructively) the existence of a winning strategy for Player 1 in the infinite version of the game.  Needless to say, Player 1's winning strategy may be impossible to determine explicitly---it seems it may not even have a finite description.

Player 1 can ensure exponential distance between long identical blocks even if he is an `underdog', playing a $(1:t)$ biased game where the second player makes $t$ moves between each move of Player 1.  In this case we have the following:
\begin{theorem}
  For any $\ep>0$, there is an $N_{\ep,t}$ for which Player 1 has a strategy in the $(1:t)$ biased binary sequence game to ensure that any two identical blocks of length $n>N_{\ep,t}$ in the resulting sequence will lie at distance greater than $(2-\ep)^{n/(t+1)}$.
\label{t.tgame}
\end{theorem}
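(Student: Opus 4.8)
The plan is to reduce Theorem~\ref{t.tgame} to a statement about finite games and then apply the extension Theorem~\ref{t.ol} of the Local Lemma. Fix $\ep>0$ and $t$, and for a length $L$ consider the truncation of the $(1:t)$ biased game in which the players build a sequence of length $L$ and Player~1 wins exactly when every pair of identical blocks of length $n>N_{\ep,t}$ occurring in it is at distance greater than $(2-\ep)^{n/(t+1)}$ (here $N_{\ep,t}$ is chosen below, independent of $L$). This is a finite game of perfect information with no draws, so one of the players has a winning strategy; I will show it is not Player~2, hence it is Player~1. A standard compactness argument --- the tree of partial plays is finitely branching, and any violating pair of blocks already appears in a finite prefix --- then promotes these finite winning strategies to a winning strategy for Player~1 in the infinite game. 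So everything reduces to the finite game; note that the case $t=1$ recovers Theorem~\ref{t.2game}.

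To show Player~2 cannot win the finite game, fix an arbitrary Player~2 strategy $S$ and let Player~1 instead move uniformly at random and independently at each of his turns. Reading the resulting random sequence $X=X_1\cdots X_L$ from left to right, each Player-1 entry is then uniform conditioned on the past, while each Player-2 entry is a deterministic function of the past via $S$. Player~1 controls one residue class modulo $t+1$, so every window of $n$ consecutive positions contains at least $m:=\lfloor n/(t+1)\rfloor$ of them. For equal-length intervals $I,J$ of length $n>N_{\ep,t}$, with $J$ the one starting later, let $A_{I,J}$ be the bad event that $X$ agrees on $I$ and on $J$. I claim
\[
\pr\big[\,A_{I,J}\ \big|\ (\text{Player-1 coins at positions outside }I\cup J)\,\big]\ \le\ 2^{-m}:
\]
reveal the remaining (Player-1) coins, those inside $I\cup J$, from left to right; each Player-1 position $q\in J$ is matched, under the identification of $J$ with $I$, to a position $q'\in I$ with $q'<q$, and $X_{q'}$ is already determined at that point, so the required match $X_q=X_{q'}$ has conditional probability $\tfrac12$, and there are at least $m$ such $q$. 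It suffices to prove that with positive probability no $A_{I,J}$ with $n>N_{\ep,t}$ and the two blocks at distance $\le(2-\ep)^{n/(t+1)}$ occurs: then some fixed play of Player~1 beats $S$, and as $S$ was arbitrary Player~2 has no winning strategy.

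Here the ordinary Local Lemma is useless: because $S$ may read the whole history, $A_{I,J}$ depends in the usual sense on \emph{every} Player-1 coin at a position up to the right endpoint of $J$, so the naive dependency graph on the bad events is essentially complete. The point of Theorem~\ref{t.ol} is precisely to let the left-to-right ordering of positions absorb these long-range but harmless dependencies, so that only the \emph{significant} ones --- those for which the block-sets overlap, $(I'\cup J')\cap(I\cup J)\ne\emptyset$ --- need edges; this is exactly the conditional bound above together with the ordering ``underlying the significant dependencies'' in the sense of the paper. With that dependency structure the computation is Beck's: since $I\cup J$ has only $2n$ elements (the distance bound does \emph{not} enter the count), the number of bad events whose blocks have length $n'$ and whose block-set meets $I\cup J$ is $O\big((n+n')(2-\ep)^{n'/(t+1)}\big)$. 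Taking weights $x_{I,J}=\mu^{m}$ with $\tfrac12<\mu<\tfrac1{2-\ep}$ --- a nonempty range exactly because $\ep>0$ --- the inequality required by Theorem~\ref{t.ol}, namely $2^{-m}\le\mu^{m}\prod(1-\mu^{m'})$ over these neighbors (with $m'=\lfloor n'/(t+1)\rfloor$), reduces after taking logarithms to
\[
C\sum_{n'>N_{\ep,t}}\big(1+\tfrac{n'}{N_{\ep,t}}\big)\big(\mu(2-\ep)\big)^{n'/(t+1)}\ \le\ \log(2\mu)
\]
for a constant $C=C(\ep,t,\mu)$; since $\log(2\mu)>0$ and $0<\mu(2-\ep)<1$, the left-hand side tends to $0$ as $N_{\ep,t}\to\infty$, uniformly over all $n>N_{\ep,t}$, so the inequality holds once $N_{\ep,t}$ is large enough.

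I expect the real work to lie not in this estimate but in fitting the setup precisely to the hypotheses of Theorem~\ref{t.ol}: one must check that the conditional probability bound for $A_{I,J}$ survives when one additionally conditions on all earlier bad events (in the position-ordering) being avoided --- which is believable because Player~1's coins are \emph{unconditionally} i.i.d., so conditioning only on the past and on earlier bad events not occurring cannot bias the fresh coins inside $J$ --- and that the dependencies coming purely from $S$'s memory are genuinely the ones the ordering hypothesis forgives rather than ones requiring edges. Pinning down this bookkeeping, and keeping every estimate uniform in $L$ and in $n$, is where the care is needed.
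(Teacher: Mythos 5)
Your overall outline is the right one and is essentially what the paper does (the paper in fact proves only Theorem~\ref{t.2game} and says the proof of Theorem~\ref{t.tgame} is identical): fix a Player-2 strategy, let Player~1 play i.i.d.\ uniform, set up bad events for coincident block pairs, apply Theorem~\ref{t.ol}, conclude Player~2 has no winning strategy, and then use a K\"onig/compactness argument to pass from the finite games to the infinite one. The conditional bound $2^{-m}$ with $m=\lfloor n/(t+1)\rfloor$ is also correct, and your count of neighbours and the weight choice $x=\mu^m$ are fine.

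However, the dependency structure you specify would not satisfy the ordering hypothesis (\ref{l.Dtop}) of Theorem~\ref{t.ol}, and this is precisely where the ``bookkeeping'' you defer actually breaks. You declare an edge between two events when $(I'\cup J')\cap(I\cup J)\neq\emptyset$, while ordering by the right endpoint of the later block $J$. With that choice, take $A$ with $I_A=[10,12]$, $J_A=[20,22]$; $B$ with $I_B=[1,3]$, $J_B=[11,13]$ (so $J_B$ meets $I_A$ and $B\le A$, hence $B\in\Gamma(A)$); and $C$ with $I_C=[5,7]$, $J_C=[15,17]$ (disjoint from $I_A\cup J_A$, and $C\le A$, so $C\notin\Gamma(A)\cup\{A\}$). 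Then $C>B$ but $C\not>A$, violating (\ref{l.Dtop}). The culprit is that you let the \emph{earlier} block $I'$ of a neighbour count for adjacency: since $I'$ can lie far to the left of $J'$, a neighbour can be ``ordered early'' while a non-neighbour sits in between, exactly what (\ref{l.Dtop}) forbids. The correct graph, as in Section~\ref{s.LII}, uses only the later blocks: $A_{k_0,\ell_0,n_0}\to A_{k,\ell,n}$ iff $\ell+n\le \ell_0+n_0$ \emph{and} $[\ell+1,\ell+n]$ meets $[\ell_0+1,\ell_0+n_0]$. This is the interval-graph special case the paper isolates after Theorem~\ref{t.ol}, with the interval attached to an event being its second block $J$ alone.

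Relatedly, your stated conditional bound conditions on the Player-1 coins \emph{outside $I\cup J$}, which is not what Theorem~\ref{t.ol} asks for: with the correct graph, the non-neighbours $C\le A$ have both their blocks contained in $[1,\ell_0]$, so you should instead observe that $\bigcap\bar C$ is determined by the history up to position $\ell_0$, condition on \emph{that}, and then use the $\ge\lfloor n_0/(t+1)\rfloor$ fresh Player-1 coins inside $J_A=[\ell_0+1,\ell_0+n_0]$ to get the $2^{-m}$ bound. Your heuristic remark in the final paragraph --- ``conditioning only on the past and on earlier bad events not occurring cannot bias the fresh coins inside $J$'' --- is the right picture; the earlier displayed conditional probability, and the adjacency rule built from $I\cup J$, are not consistent with it. Once both are replaced by the $J$-only versions, the rest of your estimate goes through and the proof matches the paper's.
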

Thus Player 1 can obtain a highly nonrepetitive sequence even in the face of a powerful adversary.  As remarked earlier, we know of no explicit sequence in which long identical blocks are exponentially far apart.  Nevertheless, Theorems \ref{t.2game} and \ref{t.tgame} can be interpreted as showing that very `robust' construction strategies for such sequences do exist.  

We note that Theorems \ref{t.2game} and \ref{t.tgame} are actually essentially best possible:  assume Player 1 has a strategy in the $(1:t)$ biased binary sequence game to force any long blocks of length $n>N$ to be at distance greater than $(2+\ep)^{n/(t+1)}$, and let Player 2 employ the simple strategy of always choosing the digit 0.  We let $S$ denote the 0-1 sequence produced by the game, and let $S'$ denote the sequence consisting of just Player 1's moves (so, it consists of every $(t+1)$st term from $S$).  Since all of Player 2's moves are 0, the fact that $S$ contains no identical blocks of length $n>N$ at distance greater than $(2+\ep)^{n/(t+1)}$ implies that $S'$ contains no identical blocks of length $n'=n/(t+1)$ at distance greater than $\frac 1 {t+1} (2+\ep)^{n/(t+1)}$.  For sufficiently large $n$ (depending on $\ep$) this implies that $S'$ contains no identical blocks of length $n'$ within distance $2^{n/t+1}=2^{n'}.$   Thus the contradiction follows from the same argument which shows that Beck's theorem is essentially best possible:  by the pigeonhole principle, there must be two identical blocks of length $n'$ whose first terms both lie among the first $2^{n'}$ terms of $S'$. Note that the same kind of reduction shows that that Theorems \ref{t.2game} and \ref{t.tgame} are in fact generalizations of Beck's Theorem \ref{t.beck} (even for \emph{any} fixed value of $t$, in the latter case).

Another result proved with the Local Lemma and related to Beck's theorem is the subject of Exercise 2 in Chapter 5 of the Alon-Spencer book\cite{pm}, which asserts that for any $\ep>0$ there is an $N_\ep$ and a $\{0,1\}$ sequence $\alpha_1,\alpha_2,\alpha_3,\dots$ in which any two adjacent blocks $\alpha_{k+1},\dots,\alpha_{k+n}$ and $\alpha_{k+n+1},\dots,\alpha_{2n}$, each of length $n>N_\ep$, differ in at least $(\frac 1 2-\ep)n$ places (so $\alpha_{k+j}\neq \alpha_{k+n+j}$ for at least $(\frac 1 2 -\ep)n$ values of $j$ between $1$ and $n$).  (It is not hard to check that this is essentially best possible; begin with the observation that for any $\{0,1\}$ coloring of $[M]$, at least half all pairs $\{\alpha_i,\alpha_j\}\sbs [M]$ will monochromatic.)   Using the same extension of the Local Lemma used to prove Theorem \ref{t.2game}, we prove the following game-theoretic version of this result.

\begin{theorem}
  For any $\ep>0$, there is an $N_{\ep,t}$ such that Player 1 has a strategy in the $(1:t)$ biased binary sequence game to ensure that any two adjacent blocks of length $n>N_{\ep,t}$ differ in at least $\left(\frac 1 {2t+2}-\ep\right)n$ places.
\label{t.agame}
\end{theorem}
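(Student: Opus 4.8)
The plan is to mirror the three-step approach indicated in the introduction for Theorems~\ref{t.2game} and~\ref{t.tgame}. First I would show, by a probabilistic argument resting on Theorem~\ref{t.ol}, that in every sufficiently long \emph{finite} truncation of the $(1{:}t)$ biased game Player~2 has no strategy forcing a violation of the conclusion. Since such a truncation is a finite game of perfect information with a well-defined winner, Zermelo's theorem then hands Player~1 a winning strategy in each finite truncation, and a compactness (K\"onig's-lemma) argument on strategies --- identical to the one used for Theorems~\ref{t.2game} and~\ref{t.tgame} --- upgrades this to a winning strategy in the infinite game. Only the first step uses anything special to this statement, so I concentrate on it.

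Fix $\ep>0$ and $t$, and truncate the game to $M$ moves; Player~1's moves occupy the positions congruent to $1$ modulo $t+1$, one ``free'' position in every run of $t+1$ consecutive positions. Suppose toward a contradiction that Player~2 has a strategy (which in a finite game we may take to be deterministic) forcing some pair of adjacent length-$n$ blocks with $n>N$ to differ in fewer than $\left(\frac{1}{2t+2}-\ep\right)n$ places, where $N=N_{\ep,t}$ is a constant to be fixed. Let Player~1 instead play independent uniform bits; then $\alpha_1,\dots,\alpha_M$ is a random sequence in which every Player~2 bit is a fixed function of the free bits at strictly earlier positions. For each admissible $(k,n)$ with $n>N$ (so $0\le k$ and $k+2n\le M$) let $A_{k,n}$ be the event that $\alpha_{k+1}\cdots\alpha_{k+n}$ and $\alpha_{k+n+1}\cdots\alpha_{k+2n}$ differ in fewer than $\left(\frac{1}{2t+2}-\ep\right)n$ places. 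The assumed winning strategy makes $\pr\big[\bigcap_{k,n}\overline{A_{k,n}}\big]=0$, so it suffices to prove this probability is positive.

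The key is a tail bound that is \emph{robust against the past}. Let $m=k+n$ be the midpoint of the pair defining $A_{k,n}$. Conditioned on the values of all free bits at positions $\le m$, the first block is completely determined, whereas the free bits at positions in $(m,m+n]$ --- at least $\flr{n/(t+1)}$ of them --- are still independent uniform bits independent of the conditioning, and each such position $m+j$ makes the comparison of $\alpha_{k+j}$ with $\alpha_{m+j}$ an independent fair coin flip. Hence $A_{k,n}$ forces a $\mathrm{Binomial}\!\left(\flr{n/(t+1)},\tfrac12\right)$ variable below $\left(\tfrac12-\ep(t+1)\right)\tfrac{n}{t+1}$, a constant-factor deviation below its mean, and a Chernoff estimate gives $\pr\big[A_{k,n}\mid \text{free bits at positions }\le m\big]\le e^{-c\ep^2 n}$ with $c=c(t)>0$, uniformly over the conditioning. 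This is exactly the hypothesis that lets the ordered Local Lemma bite where the classical one cannot: $A_{k,n}$ genuinely depends, through Player~2's strategy, on \emph{every} free bit to its left, so the ordinary dependency graph has unbounded degree, but Theorem~\ref{t.ol} needs edges only between events whose position-intervals overlap ``two-sidedly'', the entire leftward dependence being absorbed by the conditional bound above. Concretely I would order the events by their midpoint $m=k+n$ and, in the dependency graph of Theorem~\ref{t.ol}, join $A_{k,n}$ to $A_{k',n'}$ precisely when their intervals overlap with neither contained in the first half of the other; one checks that every earlier non-neighbor of $A_{k,n}$ then involves only free bits at positions $\le m$, so the displayed estimate --- being uniform over those --- governs the conditioning that Theorem~\ref{t.ol} requires. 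For fixed $(k,n)$ there are $O(n+n')$ neighbours of each length $n'>N$, so with weights $x(A_{k,n})=e^{-(c/2)\ep^2 n}$ the weighted-degree sum is $O\!\big(n\sum_{n'>N}e^{-(c/2)\ep^2 n'}\big)$ plus an $n$-independent remainder, and both pieces drop below the threshold of Theorem~\ref{t.ol} once $N=N_{\ep,t}$ is a large enough constant. Theorem~\ref{t.ol} then yields $\pr\big[\bigcap_{k,n}\overline{A_{k,n}}\big]>0$, the desired contradiction.

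I expect the only real obstacle to be bookkeeping: pinning down the ordering and the precise set of edge-bearing (``significant'') dependencies so that the leftward dependence is captured by a conditional bound of exactly the shape Theorem~\ref{t.ol} demands, and then running the weighted-degree computation with matching constants. The probabilistic core --- the uniform Chernoff bound on $\pr[A_{k,n}\mid\text{past}]$ --- should be routine once one observes that the $\approx n/(t+1)$ decisive comparisons sit at free positions lying \emph{to the right} of the midpoint and are therefore fresh coin flips given everything up to that midpoint; this is also where the denominator $2t+2$ comes from, since those $\approx n/(t+1)$ comparisons disagree on average in $\approx \tfrac12\cdot\tfrac{n}{t+1}=\tfrac{n}{2t+2}$ places.
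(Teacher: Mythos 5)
Your overall plan --- finite truncation, random play by Player~1, an application of Theorem~\ref{t.ol} to show $\pr[\bigcap\overline{A_{k,n}}]>0$, hence no winning strategy for Player~2, hence a winning strategy for Player~1 by finiteness of the game, then K\"onig to pass to the infinite game --- matches the paper exactly, and your probabilistic core (conditioning on the sequence up to the midpoint $m=k+n$ makes the $\approx n/(t+1)$ comparisons at Player~1's positions in $(m,m+n]$ fresh fair coins, giving an exponential Chernoff/binomial tail uniformly over the past) is the same estimate the paper uses. Where the proposal goes wrong is in the ordering you choose for the Lefthanded Local Lemma: you order the events by the \emph{midpoint} $m=k+n$ and put an edge between $A_{k,n}$ and $A_{k',n'}$ when their intervals ``overlap with neither contained in the first half of the other.'' This pairing does not satisfy condition~(\ref{l.Dtop}) of Theorem~\ref{t.ol}. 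Concretely, take $A=A_{10,10}$ (interval $[11,30]$, midpoint $20$), $B=A_{5,7}$ (interval $[6,19]$, midpoint $12$), and $C=A_{11,4}$ (interval $[12,19]$, midpoint $15$). Then $B\in\Gamma(A)$ under your rule ($[6,19]$ meets $[11,30]$ and is not inside $[11,20]$, nor is $[11,30]$ inside $[6,12]$), $C\not\in\Gamma(A)$ ($[12,19]\subseteq[11,20]$), $C\le A$ ($15\le 20$), yet $C>B$ ($15>12$) and $C\not>A$, violating~(\ref{l.Dtop}). You correctly observed that earlier non-neighbours of $A$ are measurable with respect to the bits at positions $\le m$, which handles condition~(\ref{l.cond}); but~(\ref{l.Dtop}) is a separate structural hypothesis needed in the inductive step (it guarantees $\ccc\not>\bbb$ so that the denominator can be bounded by the induction hypothesis), and the midpoint ordering breaks it.

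The paper avoids this by ordering by the \emph{right endpoint} $k+2n$ of the double block, declaring $A_{k,n}\le A_{k',n'}$ when $k+2n\le k'+2n'$, and putting $A_{k_0,n_0}\to A_{k,n}$ exactly when $A_{k,n}\le A_{k_0,n_0}$ and $[k+1,k+2n]$ meets the \emph{second} block $[k_0+n_0+1,\,k_0+2n_0]$. With this choice, any earlier non-neighbour $C$ of $A_{k_0,n_0}$ has its interval contained in $[1,k_0+n_0]$, whereas any out-neighbour $B$ has right endpoint $>k_0+n_0$; hence $C\not>B$ and~(\ref{l.Dtop}) holds. This is precisely the ``directed interval graph on right endpoints'' case singled out after the statement of Theorem~\ref{t.ol}. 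So the fix is not mere bookkeeping: you need to replace the midpoint quasi-order by the right-endpoint quasi-order (and make the out-neighbours of $A_{k_0,n_0}$ the events meeting the second block), after which your conditional Chernoff bound and weighted-degree computation go through essentially as you describe.
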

As in the case of Theorems \ref{t.2game} and  \ref{t.tgame}, one can check that Theorem \ref{t.agame} is essentially best possible.

A natural game-based question related to Thue-type sequences is whether there is an analogous theorem to Thue's original result on the nonrepetitive ternary sequence: namely, is there some base $c$ such that, in the $c$-ary sequence game, Player 1 can prevent the occurrence of \emph{any} identical adjacent blocks (even short ones)?  Of course, by simply imitating Player 1's moves, Player 2 can create lots of identical adjacent blocks of length 1.  In Section \ref{s.rainbow}, we prove that this is the best Player 2 can hope to achieve assuming $c$ is sufficiently large.  In fact, we will show (Theorem \ref{t.grain}) that for any $k\geq 2$ and any $c$ sufficiently large (depending on $k$), Player 1 has a strategy in the $c$-ary sequence game which ensures that out of any $k$ consecutive blocks all of the same length $\geq 2$, no two are identical.  This is a game-theoretic version of a result of Grytczuk on \emph{$k$-nonrepetitive} sequences proved using the Local Lemma \cite{rainbow}. For the most natural case $k=2$, we get the result for $c=37$ by optimizing the application of our Lefthanded Local Lemma.

A natural extension of the concept of nonrepetitiveness is given by pattern avoidance.  A word \emph{matches} a pattern $p_1p_2\cdots p_k$ if it can be partitioned into consecutive blocks $B_i$ such that $p_i=p_j$ implies that $B_i=B_j$.  To \emph{avoid} a pattern in a sequence (or string) means that there is no matching subword---thus, a square-free sequence is just one avoiding the pattern $xx$.

Unavoidable patterns are those which must appear in any sequence over a finite number of symbols.  Unavoidable patterns were characterized in \cite{bean} and \cite{Zunav}, and are exactly those patterns which match any of the words in the set $\{x,xyx,xyxzxyx,xyxzxyxwxyxzxyx,\dots\}$.   The \emph{index} of an avoidable pattern is the smallest base for which there is an infinite sequence to that base avoiding it.  Thus the index of $xx$ is 3, while the index of $xxx$ is 2.  Surprisingly, in spite of the characterization of unavoidable patterns, there are no avoidable patterns known to have large index, and it is open question whether there may be some absolute bound on the index of avoidable patterns---maybe even 6 is an upper bound.  Call a variable in a pattern \emph{isolated} if it occurs exactly once.  Note that the characterization of unavoidable patterns implies that a pattern without isolated variables is avoidable.  A result of J. Grytczuk on pattern avoidance in graphs \cite{patgraph} implies that such patterns actually have avoidability index at most 119.  Our extension of the Local Lemma improves this bound to 22, but an algebraic proof has recently been found which gives a bound of 4\cite{bgoh}.  In Section \ref{s.patt} we prove the following game-theoretic analog of this result:
\begin{theorem}
For any pattern $p$ with no isolated variables, Player 1 has a strategy in the 429-ary sequence game to ensure that the sequence that results from game-play does not contain a word which matches $p$ under a partition consisting of blocks $B_i$ all of lengths $\abs {B_i}\geq 2$.
\label{t.gpattern}
\end{theorem}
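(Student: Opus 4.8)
The plan is to deduce Theorem~\ref{t.gpattern} from the extension of the Local Lemma (Theorem~\ref{t.ol}, the ``Lefthanded'' Local Lemma) via the same three-step nonconstructive route used for Theorem~\ref{t.2game}: first show Player~2 has no winning strategy in a suitable finite version of the $429$-ary sequence game, conclude Player~1 has a winning strategy in each finite version, and finally pass to the infinite game by compactness (K\H onig's lemma on the tree of partial plays). The heart of the matter is the finite, single-round analysis: fix a length $M$ and consider the game played to produce a word $w\in[429]^M$; we must show that no strategy of Player~2 can force the appearance of a $p$-matching subword with all blocks of size $\geq 2$. To analyze this probabilistically, have Player~1 play uniformly at random (independently at each of his turns) while Player~2 plays according to an arbitrary fixed strategy $\sigma$, and for each potential ``bad occurrence''---a choice of location, block lengths $|B_i|\geq 2$ summing to at most $M$, and an assignment of the $B_i$ making the pattern match---introduce an event $A_{(\cdot)}$ that this occurrence actually materializes in the random play.

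The key point, and the reason one needs the Lefthanded rather than the ordinary Local Lemma, is that these events are badly dependent in the naive dependency graph (Player~2's move at a given position can depend on the entire history, so a bad event far to the right ``depends'' on everything to its left), but they are well-behaved with respect to the natural left-to-right ordering of positions in the sequence: the probability that a bad occurrence $A$ happens, conditioned on \emph{any} behavior of the process strictly to the left of the leftmost position of $A$ that is not itself ruled out, is still exponentially small in the total length of the occurrence. This is because within the span of $A$, each of Player~1's (random, uniform) moves must hit a prescribed value, and at least a constant fraction---roughly $1/2$ after accounting for the blocks of size $\geq 2$ and for Player~2 controlling at most ``half'' of each block---of the positions in the occurrence are Player~1 positions whose values are forced; so $\pr(A\mid\text{left context})\leq 429^{-c\,\ell}$ for an occurrence of total length $\ell$ and a constant $c>0$ coming from the block-size-$\geq 2$ hypothesis and the $1{:}1$ turn ratio. (Here the pattern having no isolated variables is used exactly as in Grytczuk's original argument: every block is ``checked against'' another equal block, so no block contributes for free.) Then one counts, for each fixed leftmost position, the number of bad occurrences of each length $\ell$---this is at most (number of compositions of $\ell$ into parts $\geq 2$) times (number of pattern-respecting assignments), which grows only like $D^\ell$ for an absolute constant $D$ depending on $p$ through its number of blocks---and checks that with $c=429$ the Lefthanded Local Lemma criterion $\sum_\ell (\#\text{occurrences of length }\ell\text{ starting here})\cdot 429^{-c'\ell}\cdot(\text{slack factor}) < 1$ holds, exactly the kind of geometric-series estimate that makes $37$ work for $k=2$ in Theorem~\ref{t.grain}; the larger base $429$ is the price of handling an arbitrary pattern $p$ rather than a single square.

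With the finite statement in hand, the passage to the infinite game is routine: for each $M$, Player~1 has a winning strategy $\tau_M$ for the length-$M$ game (winning means no bad subword appears within the first $M$ terms); these strategies, viewed as functions on the finite tree of partial plays, have a convergent subsequence (equivalently, K\H onig's lemma applied to the tree of ``Player-1 strategies that survive to depth $M$''), and the limit is a strategy for Player~1 in the infinite game that avoids every bad subword, since any bad subword would already be a finite witness contradicting $\tau_M$ for all large $M$. I expect the main obstacle to be the finite probabilistic estimate---specifically, bookkeeping the constant $c$ in $\pr(A\mid\text{left context})\leq 429^{-c\ell}$ so that it survives the loss from Player~2's arbitrary strategy (which can be thought of as adversarially ``using up'' the positions it controls) and from the fact that a matched pattern may repeat a block many times (only $|B|$ of the coincidences on a repeated block are ``new'' constraints, not all of them), and then verifying that the resulting series comparison closes at base $429$. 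Optimizing that constant is presumably where the specific number $429$ comes from, in the same spirit as the optimization yielding $c=37$ for the $k=2$ rainbow game.
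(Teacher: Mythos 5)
Your proposal follows the same route as the paper's: fix any Player 2 strategy, let Player 1 play uniformly at random, apply the Lefthanded Local Lemma with the right-endpoint ordering, and pass to the infinite game by compactness. The one technical point worth flagging is that to reach the specific base $429$ the paper does not union over all compositions of $n$ into parts $\geq 2$ but uses the sharper bound that pattern-consistent partitions are determined by one block length per variable-equivalence-class and hence number at most $2^{\lfloor n/2\rfloor}$, paired with a match probability of at most $c^{-\lfloor\lceil n/2\rceil/2\rfloor}$ (the ``forced'' fraction is about $1/4$, not $1/2$: half the length sits in non-free blocks and Player 1 controls half of that).
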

Note that, for Theorems \ref{t.2game}, \ref{t.agame}, \ref{t.grain}, and (for many patterns) \ref{t.gpattern}, the strategies which are guaranteed to exist for Player 1 must really be adaptive strategies that depend on the previous moves of Player 2. Note for example that if Player 2 was allowed to know Player 1's moves in advance, he could ensure the existence of pairs of arbitrarily long identical adjacent blocks: for any pair of adjacent blocks of odd length, he could make his moves in the first block to match the moves of Player 1 in the second one, and his moves in the second block to match Player 1's moves in the first block.

Our results in this paper represent the first successful application of a  Local Lemma to games.  There are many natural games where there is a strong Local-Lemma based probabilistic intuition suggesting the existence (or not) of a winning strategy for a player.  Previous results, however,  have come from `derandomizing' the intuition to give constructive proofs, and in some cases, the intuition suggested by the Local Lemma remains unproven (see \emph{e.g.} \cite{bsur} for a discussion).  The Lefthanded Local Lemma we present also allows improvements (for example in the $\ep$'s in Theorems \ref{t.beck} and the Alon-Spencer exercise) to previous game-free Local Lemma arguments on sequences.  Grytczuk, Przyby\l{}o, and Zhu \cite{Tchoice} have recently used restricted sampling techniques together with the Lefthanded Local Lemma to give near-optimum bounds for the `Thue choice number', the `list-chromatic' analog to the number of colors (3) required in a square-free sequence.  Their upper bound of 4 is just 1 more than the immediate lower bound of 3.

\section{An easier game}
\label{s.easy}
In this section we prove a weaker version of Theorem \ref{t.2game} to clarify the role of the Local Lemma in this type of problem, and show how the need for our `ordered' version of the Lemma arises.  The proof in this section is very similar to Beck's proof of Theorem \ref{t.beck}.
\begin{theorem}
  For any $\ep>0$ there is an $N_\ep$ such that for any $\{0,1\}$-assignment of the variables $d_2,d_4,d_6,\dots$, there is a $\{0,1\}$-assignment of the variables $\eps_1,\eps_3,\eps_5,\dots$ for which the sequence $\eps_1d_2\eps_3d_4\dots$ contains no identical intervals of a length $n>N_\ep$ and at distance $<(2-\ep)^{\frac n 2}$.
\label{t.easy}
\end{theorem}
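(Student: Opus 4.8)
The plan is to follow Beck's probabilistic strategy, but to fix the adversary's digits $d_2,d_4,\dots$ in advance and then choose the remaining digits $\eps_1,\eps_3,\dots$ independently and uniformly at random in $\{0,1\}$. For each pair of positions and each length $n>N_\ep$, let $A_{i,j,n}$ be the ``bad'' event that the length-$n$ blocks starting at positions $i$ and $j$ are identical, where $j-i<(2-\ep)^{n/2}$. We want to show that with positive probability none of these events occurs; by compactness this yields the desired infinite assignment. (Strictly, since the adversary's bits are arbitrary, we want this for \emph{every} assignment of the $d$'s, so the probability bound must be uniform — which it will be, since our estimates only use the random bits.) First I would bound $\pr(A_{i,j,n})$: within a window of length $n$, at least $\lceil n/2\rceil$ of the positions carry an independent fair random bit (the odd positions), and for the two blocks to agree, each such random bit in the first block must equal the corresponding bit in the second block. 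Even in the worst case where the matched-up position in the second block is also a random (odd) position, each such constraint holds with probability $\tfrac12$ independently, so $\pr(A_{i,j,n})\le 2^{-\lfloor n/2\rfloor}$ or thereabouts. The key point is that we only get ``half'' the exponent, which is exactly why the bound in the theorem is $(2-\ep)^{n/2}$ rather than $(2-\ep)^n$.

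Next I would set up the dependency structure for the Asymmetric (General) Local Lemma. Two events $A_{i,j,n}$ and $A_{i',j',n'}$ are dependent only if the sets of odd positions they involve overlap; $A_{i,j,n}$ involves $O(n)$ odd positions (in its two windows), and each odd position lies in $O(N\cdot N)$ — really, $\sum_{n} n$ up to the relevant cutoff on lengths, but the distance constraint $j-i<(2-\ep)^{n/2}$ keeps the count of relevant $j$ for each $i$ and $n$ bounded by $(2-\ep)^{n/2}$ — many events. So for a fixed length $n$, each event $A_{i,j,n}$ is dependent on at most roughly $n\cdot(2-\ep)^{n/2}$ other events of length $n$, and on a comparable but summable number of events of each other length $n'$. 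I would then pick weights $x_{i,j,n}$ depending only on $n$, of the form $x_{i,j,n}=c\cdot\mu^{n}$ for a suitable constant $\mu$ with $\frac{1}{2}<\mu<\frac{1}{2-\ep}$ (possible since $\ep>0$), and verify the Local Lemma inequality
\[
\pr(A_{i,j,n})\le x_{i,j,n}\prod_{(i',j',n')\sim(i,j,n)}(1-x_{i',j',n'}).
\]
The left side is about $2^{-n/2}$; the right side is about $c\mu^{n}$ times $\exp\!\big(-\sum x_{i',j',n'}\big)$, and the sum of weights of neighbors is dominated by $\sum_{n'} (\text{number of length-}n' \text{ neighbors})\cdot c\mu^{n'} \approx \sum_{n'} n'(2-\ep)^{n'/2}\cdot c\mu^{n'}$, which converges (since $\mu<1/(2-\ep)$ means $(2-\ep)^{1/2}\mu<(2-\ep)^{-1/2}<1$) and can be made as small as we like by taking $N_\ep$ large, so that the product term is close to $1$. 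Then the inequality reduces to $2^{-n/2}\le c\mu^n\cdot(1-o(1))$, which holds for all $n>N_\ep$ precisely because $\mu>1/2$, i.e. $2\mu>1$, choosing $c$ appropriately.

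The main obstacle — and the reason this section exists in the paper — is the bound $\pr(A_{i,j,n})\le 2^{-\lfloor n/2\rfloor}$, or more precisely its interaction with the dependency count. One has to be slightly careful: when position $p$ in the first block is odd (random) but the aligned position in the second block is even (adversary-controlled), the constraint is that one random bit equals a \emph{fixed} value, still probability $\tfrac12$, so the estimate survives; but when \emph{both} aligned positions are even, the constraint is deterministic and contributes no factor of $\tfrac12$. In the easier setting of this theorem the blocks are aligned with a fixed parity offset (since $i,j$ range over all integers), so roughly half the positions give a genuine $\tfrac12$ factor and the estimate $2^{-\Theta(n)}$ with the right constant $\tfrac12$ in the exponent goes through — but one must track the floor/ceiling and the parity of $j-i$ to be sure the exponent is at least $\lfloor n/2\rfloor$ and not smaller. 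Once $\pr(A_{i,j,n})\le 2^{-\lfloor n/2\rfloor}$ is in hand, everything else is the routine Local Lemma bookkeeping sketched above, and the proof concludes by applying the Local Lemma to finite truncations and then passing to the limit by König's lemma / compactness. I expect the parity/constant tracking in the probability estimate to be the only genuinely delicate point, and I would handle it by carefully choosing which positions to expose as the ``independent'' ones (always the odd positions of the first block) rather than trying to symmetrize.
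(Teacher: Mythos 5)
Your proposal follows essentially the same route as the paper: freeze the adversary's even-indexed bits, randomize Player 1's odd-indexed bits, bound $\pr(A_{i,j,n})\le 2^{-\lfloor n/2\rfloor}$ by exposing random bits, and apply the asymmetric Local Lemma with geometrically decaying weights (the paper takes $x_{k,\ell,n}=(2-\ep)^{-n/2}/n^3$), finishing by compactness. Two slips are worth correcting, though neither changes the underlying plan. First, with weights of the form $c\mu^n$ the admissible window is $2^{-1/2}<\mu<(2-\ep)^{-1/2}$, not $(1/2,\,1/(2-\ep))$: you need $c\mu^n\ge 2^{-n/2}$, which forces $\mu>2^{-1/2}$, and as stated your range contains no admissible value once $\ep\le 2-\sqrt{2}$. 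Your constraints are correct if the weight is $c\mu^{n/2}$ instead, which is likely what you intended and matches the paper's choice $(2-\ep)^{-n/2}$ up to the polynomial factor $n^{-3}$ that plays the role of your constant $c$.

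Second, for the probability estimate you should expose the odd positions of the \emph{later} block $[\ell+1,\ell+n]$, not of the first block as your parenthetical proposes. Revealing from left to right, each odd bit in the later block is constrained to equal the aligned bit at a strictly smaller index in the earlier block, which is already determined (either deterministic or previously revealed), so each contributes an independent factor of $\tfrac12$; this gives $2^{-\lfloor n/2\rfloor}$ uniformly over both parities of the offset and over the overlapping case, and the parity case analysis you sketched becomes unnecessary. By contrast, an odd bit in the \emph{first} block may be aligned with a not-yet-determined odd bit in the second block when the blocks overlap, so that direction does not close directly. This is exactly how the paper obtains the bound $2^{-(n-1)/2}$, and the same asymmetry lets the paper use a smaller dependency graph, keeping only neighbors whose intervals meet the later block.
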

Compared with Theorem \ref{t.2game}, Theorem \ref{t.easy} addresses the situation where Player 1 knows all of Player 2's moves in advance (and so Player 2's moves cannot depend on any of Player 1's moves).   The proof of Theorem \ref{t.easy} uses the Local Lemma.  We recall the following general version of the lemma, due to Erd\H{o}s and Spencer\cite{ES}, sometimes referred to as the `Lopsided' Local Lemma.

\begin{theorem}[\textbf{Lopsided General Local Lemma}]
Consider a finite family of events $\aaa$ as the vertex-set of a directed graph $G$.  Suppose that there are real numbers $0<x_A<1$ ($A\in \aaa$) such that, for each $A\in \aaa$ and each $\ccc\subset \aaa\stm \Gamma(A)$,
\label{t.lll}
\begin{equation}
\pp\left(A|\bigcap_{C\in\ccc}\bar C\right)\leq x_A\prod_{B\gets A} (1-x_B).
\label{l.lll}
\end{equation}
Then $\pp(\bigcap_{A\in \aaa} \bar A)>0$.\qed
\end{theorem}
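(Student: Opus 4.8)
The plan is to give the standard proof of this lemma of Erd\H{o}s and Spencer, for which the desired conclusion $\pp\left(\bigcap_{A\in\aaa}\bar A\right)>0$ is merely the last step of an induction on the size of the conditioning family. Precisely, I would prove by induction on $\abs\ccc$ the conjunction, for every $\ccc\subseteq\aaa$, of: (a) $\pp\left(\bigcap_{C\in\ccc}\bar C\right)>0$; and (b) for every $A\in\aaa\stm\ccc$, one has $\pp\left(A|\bigcap_{C\in\ccc}\bar C\right)\le x_A$. Statement (a) with $\ccc=\aaa$ is precisely the theorem. The base case $\ccc=\emptyset$ is immediate: (a) asserts that the whole probability space has probability $1>0$, and (b) is the hypothesis \eqref{l.lll} read with an empty conditioning set, since $\pp(A)\le x_A\prod_{B\gets A}(1-x_B)\le x_A$ because each factor lies in $(0,1)$.

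For the inductive step with $\ccc\neq\emptyset$ I would establish (a) first: enumerating $\ccc=\{C_1,\dots,C_n\}$ and telescoping,
\[
\pp\left(\bigcap_{l=1}^{n}\bar C_l\right)=\prod_{l=1}^{n}\left(1-\pp\left(C_l\,|\,\bar C_1\cap\cdots\cap\bar C_{l-1}\right)\right),
\]
where for each $l$ the event $\bar C_1\cap\cdots\cap\bar C_{l-1}$ has positive probability by part (a) of the induction hypothesis (applied to $\{C_1,\dots,C_{l-1}\}$, of size $<n$) and part (b), applied to the same set, bounds $\pp(C_l\,|\,\cdots)$ by $x_{C_l}<1$; so every factor is at least $1-x_{C_l}>0$ and the product is positive. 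This proves (a) for $\ccc$, making the conditional probability in (b) well-defined.

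Next fix $A\in\aaa\stm\ccc$ and prove (b). Split $\ccc=\ccc_1\dun\ccc_2$ with $\ccc_1=\ccc\cap\Gamma(A)$ (the out-neighbours of $A$ within $\ccc$) and $\ccc_2=\ccc\stm\Gamma(A)$, and write the target as the ratio
\[
\pp\left(A|\bigcap_{C\in\ccc}\bar C\right)=\frac{\pp\left(A\cap\bigcap_{C\in\ccc_1}\bar C\,|\,\bigcap_{C\in\ccc_2}\bar C\right)}{\pp\left(\bigcap_{C\in\ccc_1}\bar C\,|\,\bigcap_{C\in\ccc_2}\bar C\right)},
\]
which is legitimate since $\pp\left(\bigcap_{C\in\ccc_2}\bar C\right)\ge\pp\left(\bigcap_{C\in\ccc}\bar C\right)>0$ and the denominator equals $\pp\left(\bigcap_{C\in\ccc}\bar C\right)/\pp\left(\bigcap_{C\in\ccc_2}\bar C\right)>0$. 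The numerator is at most $\pp\left(A|\bigcap_{C\in\ccc_2}\bar C\right)$, which since $\ccc_2\subseteq\aaa\stm\Gamma(A)$ is at most $x_A\prod_{B\gets A}(1-x_B)$ by a direct application of \eqref{l.lll} (not of the induction). For the denominator, enumerate $\ccc_1=\{C_1,\dots,C_k\}$ and telescope once more into $\prod_{l=1}^{k}\left(1-\pp\left(C_l\,|\,\bar C_1\cap\cdots\cap\bar C_{l-1}\cap\bigcap_{C\in\ccc_2}\bar C\right)\right)$; each $C_l$ is conditioned on bars over $\{C_1,\dots,C_{l-1}\}\cup\ccc_2$, a subset of $\ccc\stm\{C_l\}$ of size strictly less than $\abs\ccc$, so by the induction hypothesis this event is positive and $\pp(C_l\,|\,\cdots)\le x_{C_l}$. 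Hence the denominator is at least $\prod_{l=1}^{k}(1-x_{C_l})$, and since $\ccc_1\subseteq\Gamma(A)$ this is at least $\prod_{B\gets A}(1-x_B)$. Dividing, the common factor $\prod_{B\gets A}(1-x_B)$ cancels and leaves $\pp\left(A|\bigcap_{C\in\ccc}\bar C\right)\le x_A$, completing the induction. (The degenerate case $\ccc_1=\emptyset$ is absorbed here: then $\ccc=\ccc_2$ and (b) is a direct instance of \eqref{l.lll}.)

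No probabilistic independence enters anywhere --- only the lopsided hypothesis \eqref{l.lll} and elementary identities for conditional probabilities --- so I expect no conceptual obstacle. The one point requiring care, and the reason (a) and (b) must be carried together through the induction, is bookkeeping: every conditioning event occurring in the ratios and telescoping products above must be kept of positive probability so that those expressions are defined at all, and one must check the mild arithmetic that the index set $\{C_1,\dots,C_{l-1}\}\cup\ccc_2$ used in the denominator is genuinely smaller than $\ccc$ (it is, because the omitted $C_l$ lies in $\ccc_1$), which is what makes the induction well-founded.
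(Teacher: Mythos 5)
Your proof is correct and is the standard Erd\H{o}s--Spencer argument; note that the paper itself states Theorem \ref{t.lll} without proof (it is quoted as a known result), its own proof effort going into the Lefthanded generalization, Theorem \ref{t.ol}, which it proves by essentially the argument you give. Your key step---splitting the conditioning family into $\ccc\cap\Gamma(A)$ and $\ccc\setminus\Gamma(A)$, bounding the numerator by the hypothesis (\ref{l.lll}) and the denominator by induction and telescoping---is exactly the paper's Case~1 decomposition in the proof of Theorem \ref{t.ol} (specialized to the trivial quasi-order), the only difference being bookkeeping: you carry positivity together with the single-event bound $\pp\left(A|\bigcap_{C\in\ccc}\bar C\right)\leq x_A$ through an induction on $\abs{\ccc}$, whereas the paper carries the conditional product bound (\ref{l.full}) over a family $\aaa_1$ and inducts on $\abs{\aaa_1}+\abs{\aaa_2}$.
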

\noindent Here $B\gets A$ means that $(A,B)$ is an edge of $G$ ($B$ is an \emph{out-neighbor} of $A$), and $\Gamma(A)$ denotes the out-neighbor set of $A$.  Observe that in the case where $A$ is independent of all collections of events $C\not\gets A$, line (\ref{l.lll}) is satisfied so long as the required bound holds for the unconditional probability $P(A)$; this gives the normal statement of the Local Lemma, which is sufficient for our proof in this section.

To apply the Local Lemma, we need to work with a finite family of bad events.  For this purpose we prove a finite version of the theorem, from which Theorem \ref{t.easy} follows by compactness.  For the sake of convenience, set $\alpha_i=\eps_i$ for odd $i$, and $\alpha_i=d_i$ for even $i$; so $\alpha_i$ is just the $i$th term of the sequence referred to in Theorem \ref{t.easy}.

\begin{theorem}
  For any $\ep>0$ there is an $N_\ep$ such that for any $M$ and any $\{0,1\}$-assignment of the variables $d_2,d_4,\dots,d_{(2\flr{\frac M 2})}$, there is a $\{0,1\}$-assignment of the variables $\eps_1,\eps_3,\dots,\eps_{(2\flr{\frac {M-1}{2}}+1)}$ so that the sequence $\eps_1d_2\eps_3d_4\dots \alpha_M$ contains no pair of identical intervals of length $n>N_\ep$ and at distance $<(2-\ep)^{\frac n 2}$.
\label{t.feasy}
\end{theorem}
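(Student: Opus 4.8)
The plan is to apply the Lopsided General Local Lemma (Theorem~\ref{t.lll}) --- in fact just the ordinary Local Lemma, as remarked after its statement --- to a random $\{0,1\}$-assignment of the odd-indexed variables $\eps_1,\eps_3,\dots$, with the even-indexed variables $d_2,d_4,\dots$ fixed to their given values. For each pair of disjoint intervals $I,J\subset[M]$ of equal length $n>N_\ep$ lying at distance $<(2-\ep)^{n/2}$, let $A_{I,J}$ be the bad event that the subwords on $I$ and $J$ agree. The conclusion $\pp(\bigcap \bar A_{I,J})>0$ then furnishes a single assignment of the $\eps$'s avoiding all bad events, which is exactly the statement to be proved.

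The main computation is to bound $\pp(A_{I,J})$. The word on $I$ and the word on $J$ agree at a position iff the corresponding entries match; the entries controlled by $d$'s are already determined (they agree or not, and if some forced pair disagrees then $\pp(A_{I,J})=0$), while each position controlled by an $\eps$ contributes an independent factor of $\tfrac12$ --- unless the same $\eps$ variable indexes both positions in a matched pair, in which case the match is automatic. The key point, and the reason the exponent is $n/2$ rather than $n$, is that of the $n$ matched coordinate-pairs, at least roughly $n/2 - O(1)$ of them involve at least one $\eps$ variable whose value is "free" (not forced to a fixed digit and not shared with its partner), because the parities of $I$ and $J$ split the positions into $\eps$-controlled and $d$-controlled halves; a short parity/counting argument shows the number of genuinely free binary choices is at least $\lfloor n/2\rfloor - c$ for an absolute constant $c$. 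Hence $\pp(A_{I,J})\le 2^{-n/2+c}$.

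For the dependency structure: $A_{I,J}$ depends only on the $\eps$'s with indices in $I\cup J$, so two bad events are independent unless their index-intervals intersect; thus $A_{I,J}$ is adjacent only to events $A_{I',J'}$ with $(I'\cup J')\cap(I\cup J)\neq\emptyset$. Counting these: for each length $n'$ and each of the $\le 2(|I|+|J|)\le 8n$ positions where an interval of $I'\cup J'$ could meet $I\cup J$, the partner interval must start within distance $(2-\ep)^{n'/2}$, so the number of neighbors of $A_{I,J}$ among pairs of length $n'$ is at most about $16n\cdot(2-\ep)^{n'/2}$. We then take $x_{I,J}=(2-\tfrac\ep2)^{-n}$ (say), or more simply $x_{I,J} = \beta^{-n}$ for a suitable $\beta$ with $\sqrt{2-\ep}<\beta<\sqrt2$, and verify the Local Lemma inequality
\begin{equation*}
\pp(A_{I,J})\ \le\ x_{I,J}\prod_{(I',J')\gets (I,J)}(1-x_{I',J'}).
\end{equation*}
Since $\sum_{I',J'} x_{I',J'}$ over the neighbors is dominated by $\sum_{n'\ge N_\ep} 16n\cdot(2-\ep)^{n'/2}\beta^{-n'}$, which is a convergent geometric-type series (as $(2-\ep)^{1/2}/\beta<1$) and can be made arbitrarily small by choosing $N_\ep$ large, the product is $\ge \tfrac12$ say; and $2^{-n/2+c}\le \tfrac12\beta^{-n}$ holds for all $n>N_\ep$ once $N_\ep$ is large, because $\beta<\sqrt2$. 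This pins down the choice of $N_\ep$.

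The main obstacle is the bookkeeping in the probability bound: one must carefully verify that the parity offset between $I$ and $J$ really does guarantee $\ge n/2-O(1)$ free binary coordinates regardless of how $I$ and $J$ overlap in index-space (they are disjoint as intervals but the relevant $\eps$-index sets could in principle share variables, and one must check that shared or $d$-forced coordinates cost at most a constant, not a constant fraction). Everything else --- the neighbor count and the convergence of the defining series --- is routine geometric estimation of the same flavor as in Beck's original argument, and the choice of $N_\ep$ is dictated by requiring both $2^{-n/2+c}\le\tfrac12\beta^{-n}$ and the neighbor-sum to be small. Theorem~\ref{t.easy} then follows from Theorem~\ref{t.feasy} by a standard compactness (König's lemma) argument over the finite truncations.
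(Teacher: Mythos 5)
Your approach is the same as the paper's in all essentials: flip fair coins for the odd positions, set up bad events for close identical pairs, apply the (ordinary) Local Lemma to get positive probability of success, and pass to the infinite game by K\"onig's lemma. The one genuine gap is that you restrict the bad events to pairs of \emph{disjoint} intervals. The theorem forbids identical intervals of length $n>N_\ep$ at \emph{any} distance $d<(2-\ep)^{n/2}$, which includes overlapping pairs ($1\le d<n$); these are not in your event family, and avoiding the disjoint ones does not automatically rule them out. An overlapping identical pair of length $n$ at distance $d$ forces period $d$ on an interval of length $n+d$, from which one can only extract a disjoint identical pair of length about $\lfloor(n+d)/2\rfloor$ at distance about $\lceil(n+d)/2\rceil$; when $d$ is small and $n$ barely exceeds $N_\ep$, or when $d$ is close to $n$, this length falls short of $N_\ep$ and the disjoint bad events are silent. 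Your parenthetical about the $\ep$-index sets of $I$ and $J$ ``sharing variables'' is a symptom of the confusion: disjoint intervals cannot share $\ep$-indices, so your hedging about shared or $d$-forced coordinates costing $O(1)$ only makes sense once overlaps are admitted.

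The fix is simply to admit overlaps, and then the probability estimate is cleaner, not harder: condition on positions left to right; each of the at least $\lfloor n/2\rfloor$ odd positions in the right block must equal the value at a strictly earlier position, so it contributes an independent factor $\tfrac12$, giving $\pp(A)\le 2^{-(n-1)/2}$ with no $O(1)$ fudge and no case analysis on whether the blocks overlap --- this is exactly the point the paper emphasizes in its bound (\ref{l.pbound}). Your other deviations from the paper are harmless: you define adjacency by intersection with $I\cup J$, whereas the paper uses only the right block $[\ell_0+1,\ell_0+n_0]$ of the target event, a strictly smaller dependency digraph (which it shrinks further in the game version); and you take geometric weights $x=\beta^{-n}$ with $\sqrt{2-\ep}<\beta<\sqrt2$ where the paper uses $x=1/(f(n)n^3)$. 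Either choice makes the neighbor sum converge, and the choice of $N_\ep$ goes through as you describe.
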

\begin{proof}
Fix $\ep>0$ and any $\{0,1\}$-assignment of the variables $d_2,d_4,\dots,d_{(2\flr{\frac M 2})}$ (Player 2's moves).  We let Player 1's moves (the $\eps_i$'s)  be chosen by independent random coin-flips: each variable is 1 or 0 with probability $\frac 1 2$ for either case.  Let $f(n)=(2-\ep)^{\frac n 2}$: this is the distance we seek to guarantee that any pairs of identical intervals of length $n>N_\ep$ can be separated by.

To apply the Local Lemma, we let the event $A_{k,\ell,n}$ $(k<\ell)$ indicate that the blocks $\alpha_{k+1},\alpha_{k+2},\dots,\alpha_{k+n}$ and $\alpha_{\ell+1},\alpha_{\ell+2},\dots,\alpha_{\ell+n}$ are identical---\emph{i.e.}, that $\alpha_{k+i}=\alpha_{\ell+i}$ for all $1\leq i\leq n$.  We let $\aaa=\{A_{k,\ell,n}\st n>N_\ep, (\ell-k)<f(n)\}$ for some $N_\ep$ which depends only on $\ep$.  We want to apply the Local Lemma to conclude that, with positive probability, none of the events from $\aaa$ occur.

We define the graph $G$ by letting the out-neighborhood $\Gamma(A_{k_0,\ell_0,n_0})$ of the event $A_{k_0,\ell_0,n_0}$ consist of all events $A_{k,\ell,n}$ for which $[k+1,k+n]\cup [\ell+1,\ell+n]$ intersects $[\ell_0+1,\ell_0+n]$.  Finally, we define the variables 
\[
x_{k,\ell,n}=x_{A_{k,\ell,n}}=\frac 1 {f(n)n^3}.
\]

To check that condition (\ref{l.lll}) holds, first observe that $\pp(A_{k_0,\ell_0,n_0})\leq 2^{-(n_0-1)/2}$, since for any selection of the elements in the block $\alpha_{k_0+1},\dots,\alpha_{k_0+n_0}$, only one assignment of the at least $\frac {n_0-1}{2}$ variables $\eps_i$ ($\ell_0<i\leq \ell_0+n_0$, $i$ odd) allows the event $A_{k_0,\ell_0,n_0}$ (observe that this is the case even if $[k_0+1,k_0+n_0]$ and $[\ell_0+1,\ell_0+n_0]$ overlap).  Moreover, we have that $A_{k_0,\ell_0,n_0}$ is mutually independent of all the events in any family $\ccc\sbs \aaa\stm \Gamma(A).$ Thus, in particular, we have that
\begin{equation}
\pp\left(A_{k_0,\ell_0,n_0}|\bigcap_{C\in\ccc}\bar C\right)=\pp\left(A_{k_0,\ell_0,n_0}\right)\leq 2^{-(n_0-1)/{2}}.
\label{l.pbound}
\end{equation}
To show (\ref{l.lll}), it remains to check that, for any event $A_{k_0,\ell_0,n_0}\in \aaa$, we have 
\begin{equation}
x_{k_0,\ell_0,n_0}\prod_{\substack{A_{k,\ell,n}\in\\ \Gamma(A_{k_0,\ell_0,n_0})}} (1-x_{k,\ell,n})\geq 2^{-(n_0-1)/2}.
\label{l.xprodb}
\end{equation}
For any fixed $k_0,\ell_0,n_0$ and any fixed $n$, there are $<2(n+n_0)f(n)$ choices of $k,\ell,n$ satisfying $\ell-k<f(n)$ such that at least one of the intervals $[k+1,k+n],[\ell+1,\ell+n]$ overlaps with the interval $[\ell_0+1,\ell_0+n]$.

Thus, we can bound the product in line (\ref{l.xprodb}) as
\diffblock{\begin{multline}
x_{k_0,\ell_0,n_0}\prod_{\substack{A_{k,\ell,n}\in\\ \Gamma(A_{k_0,\ell_0,n_0})}} (1-x_{k,\ell,n})>
x_{k_0,\ell_0,n_0}\prod_{n=N_\ep}^\infty \left(1-\frac 1 {f(n)n^3}\right)^{2(n+n_0)f(n)}\\>
\frac{1}{f(n_0)n_0^3}\left(1-\sum_{n=N_\ep}^\infty \frac 1 {n^3}\right)^{2n_0}\left(1-\sum_{n=N_\ep}^\infty \frac 1 {n^2}\right)^2\\>
\frac{(2-\ep)^{-\frac {n_0} 2}}{n_0^3}\left(1-\frac 1 {N_\ep}\right)^{2n_0+2}
>2^{-(n_0-1)/ 2}
\label{l.ecalc}
\end{multline}}
for sufficiently large $N_\ep$ depending only on $\ep$ (and given that $n_0\geq N_\ep$).  Lines (\ref{l.pbound}) and (\ref{l.ecalc}) together give the condition (\ref{l.lll}), so, by the Local Lemma, there is a positive probability that none of the events $A_{k,\ell,n}$ ($n>N_\ep$, $\ell-k<f(n)$) occur, giving us Theorem \ref{t.feasy}.  Theorem \ref{t.easy} follows by a straightforward compactness argument.
\end{proof}

\vspace{1em}
What goes wrong when we try to apply the Local Lemma to prove Theorem \ref{t.2game}?  It is still true that $P(A_{k_0,\ell_0,n_0})\leq 2^{-(n_0-1)/2}$.  It is even still true that we have $P(A_{k_0,\ell_0,n_0}|\bigcap_{C\in\ccc}\bar C)\leq 2^{-(n_0-1)/2}$, so long as $\ccc$ only consists of events $A_{k,\ell,n}$ coming \emph{earlier} than $A_{k_0,\ell_0,n_0}$, in the sense that $\ell+n\leq \ell_0$, since no terms in the sequence produced by the game can affect the outcome of coin flips made by Player 1 to determine his moves later in the game.

The problem occurs when $\ccc$ contains events $A_{k,\ell,n}$ occurring \emph{after} $A_{k_0,\ell_0,n_0}$; say for example, $k>\ell_0+n_0$.  The problem with this situation is that  Player 2's strategy of play during these later intervals may well depend on whether or not the event $A_{k,\ell,n}$ occurred.  For example, if Player 2 is a very `sportsmanlike' player, he might `go easy' on Player 1 by helping Player 1 avoid as many bad interval pairs as possible as soon as he succeeds at winning by creating one such pair.  In this situation, we see that we cannot argue any useful upper bound on the probability $P(A_{k_0,\ell_0,n_0}|\bigcap_{C\in\ccc}\bar C)$, since the occurrence of the event $\bigcap_{C\in \ccc}\bar C$ would suggest that Player 2 is playing `sportsmanlike', and so has previously won, increasing the probability of the event $A_{k_0,\ell_0,n_0}$.

This problem seems quite annoying, since the problem only arises from the possibility that Player 2's behavior depends on whether or not he has previously secured a win by producing a bad interval, and yet, from the standpoint of whether or not a given strategy for Player 2 is a winning strategy, it is irrelevant how Player 2 plays after securing a win.  Morally speaking, it maybe seems enough that Player 1 can always do well no matter how the game has gone so far, and unreasonable to require him to play well regardless of how the game will continue.  In fact, this intuition is correct, which is the purpose of the `Lefthanded' version of the Local Lemma proved in the next section.

\section{Lefthanded Local Lemma}
\label{s.lllll}
The Local Lemma can be seen as generalizing the basic fact that if $\aaa$ is a family of independent events with probabilities $<1$, then $\pp(\bigcap_{A\in \aaa}\bar A)=\prod_{A\in \aaa}\pp(\bar A)>0$.  The `lopsided' form of the Local Lemma due to Erd\H{o}s and Spencer (Theorem \ref{t.lll}) can be seen as generalizing the fact that if $\pp(\bar A|\bigcap_{\ccc}\bar C)>0$ for all families $\ccc\sbs \aaa$, then 
\[
\pp\left(\bigcap_{A\in \aaa} \bar A\right)=\pp\left(\bar A'|\bigcap_{A\in \aaa\stm \{A'\}} \bar A\right)\pp\left(\bigcap_{A\in \aaa\stm \{A'\}}\bar A\right)>0,
\]
where `$>0$' follows by induction on $\abs{\aaa}$.

Of course, the condition $\pp(A|\bigcap_{\ccc}\bar C)<1$ for all families $\ccc\sbs \aaa$ is much more restrictive than is necessary for this kind of conclusion.  If we write $\aaa=\{A_1,A_2,\dots,A_m\}$, then it is sufficient, for example, to have that 
\[
(\forall i)(\forall \ccc\sbs \{A_j|j<i\})\;\;\;\pp\left(A_i|\bigcap_{\ccc}\bar C\right)<1,
\]
since in this case we can write $\pp\left(\bigcap_{A_j\in \aaa}A_j\right)$ as
\[
\pp\left(A_m|\bigcap_{j<m}A_j\right)\pp\left(A_{m-1}|\bigcap_{j<m-1}A_j\right)\cdots\pp\left(A_2|A_1\right)\pp\left(A_1\right)>0.
\]
The following `Lefthanded' version of the Local Lemma allows an analogous relaxation of the conditions of the Local Lemma. Recall that a \emph{quasi-order} is a transitive and reflexive (but not necessarily antisymmetric) binary relation.
\begin{theorem}
  Consider a family of events $\aaa$ as the vertices of a directed graph $G$, and endowed with some quasi-order $\leq$ such that
  \begin{equation}
(\forall A\in \aaa)(\forall B\in \Gamma(A),C\not\in \Gamma(A)\cup A) \;\; C\not>B \textrm{ or } C>A.
    \label{l.Dtop}
  \end{equation}
Assume further that there is an assignment of real numbers $0<x_A<1$ ($A\in \aaa$) such that for any $A\in \aaa$, and any family $\ccc\sbs \aaa\stm \Gamma(A)$ satisfying $C\not> A$ for all $C\in\ccc$, we have
  \begin{equation}
    \pp\left(A|\bigcap_{C\in \ccc} \bar C\right)\leq x_A\prod_{B\gets A}(1-x_B).
    \label{l.cond}
  \end{equation}
Then we have
  \begin{equation}
    \pp\left(\bigcap_{A_1\in \aaa_1}\bar A_1|\bigcap_{A_2\in \aaa_2}\bar A_2\right)\geq \prod_{A_1\in \aaa_1}(1-x_{A_1})
    \label{l.full}
  \end{equation}
  for any disjoint families $\aaa_1$, $\aaa_2$ for which $A_2\not> A_1$ for all $A_1\in \aaa_1$, $A_2\in \aaa_2$.  In particular,
  \begin{equation}
    \pp\left(\bigcap_{A\in \aaa}\bar A\right)>0.
  \end{equation}
  \label{t.ol}
\end{theorem}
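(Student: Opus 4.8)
The plan is to run the standard inductive proof of the lopsided Local Lemma (Theorem~\ref{t.lll}), taking care that \emph{every} conditioning family that arises respects the order constraint. The core is the following claim, which I would prove by induction on $\abs{\SSS}$, simultaneously with the auxiliary statement that $\pp(\bigcap_{C\in\SSS}\bar C)\geq\prod_{C\in\SSS}(1-x_C)>0$ for every finite $\SSS\sbs\aaa$ (so that all conditionings below are legitimate):

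\emph{For every $A\in\aaa$ and every $\SSS\sbs\aaa\sm\{A\}$ with $C\not>A$ for all $C\in\SSS$, one has $\pp(A\mid\bigcap_{C\in\SSS}\bar C)\leq x_A$.}

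\noindent Here $>$ denotes the strict partial order associated to the quasi-order $\leq$ (so that linear extensions exist). The base case $\SSS=\emptyset$ is exactly (\ref{l.cond}) with $\ccc=\emptyset$; and for the auxiliary statement one lists $\SSS=\{C_1,\dots,C_m\}$ in an order extending $>$ (so $C_i\not>C_j$ whenever $i<j$), telescopes $\pp(\bigcap_{\SSS}\bar C)=\prod_{j}\pp(\bar C_j\mid\bigcap_{i<j}\bar C_i)$, and applies the claim for smaller sets to bound each factor below by $1-x_{C_j}$.

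For the inductive step of the claim, split $\SSS$ into $\SSS_1=\SSS\cap\Gamma(A)$ and $\SSS_2=\SSS\sm\Gamma(A)$ and write
\[
\pp\!\left(A\,\middle|\,\bigcap_{C\in\SSS}\bar C\right)=\frac{\pp\!\left(A\cap\bigcap_{C\in\SSS_1}\bar C\,\middle|\,\bigcap_{C\in\SSS_2}\bar C\right)}{\pp\!\left(\bigcap_{C\in\SSS_1}\bar C\,\middle|\,\bigcap_{C\in\SSS_2}\bar C\right)}.
\]
The numerator is at most $\pp(A\mid\bigcap_{C\in\SSS_2}\bar C)$, which by (\ref{l.cond}) is at most $x_A\prod_{B\gets A}(1-x_B)$ --- this is legitimate since $\SSS_2\sbs\aaa\sm\Gamma(A)$ and each $C\in\SSS_2$ satisfies $C\not>A$. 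For the denominator, list $\SSS_1=\{B_1,\dots,B_r\}$ in an order extending $>$, telescope as above, and apply the inductive hypothesis to each $B_j$ with conditioning family $\{B_1,\dots,B_{j-1}\}\cup\SSS_2$ to conclude the $j$th factor is $\geq 1-x_{B_j}$; this gives denominator $\geq\prod_{C\in\SSS_1}(1-x_C)\geq\prod_{B\gets A}(1-x_B)$, and hence $\pp(A\mid\bigcap_{\SSS}\bar C)\leq x_A$. The step that uses (\ref{l.Dtop}) is precisely checking that this conditioning family is admissible for the hypothesis applied to $B_j$: that $B_i\not>B_j$ for $i<j$ is built into the chosen order, while $C\not>B_j$ for $C\in\SSS_2$ is exactly (\ref{l.Dtop}) applied with this $A$, with $B=B_j\in\Gamma(A)$, and $C\notin\Gamma(A)\cup\{A\}$, combined with the known fact $C\not>A$.

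With the claim established, (\ref{l.full}) follows by the same telescoping at the outer level: given disjoint $\aaa_1,\aaa_2$ with $A_2\not>A_1$ for all $A_1\in\aaa_1$, $A_2\in\aaa_2$, list $\aaa_1=\{A^1,\dots,A^m\}$ in an order extending $>$, write $\pp(\bigcap_{A\in\aaa_1}\bar A\mid\bigcap_{A\in\aaa_2}\bar A)=\prod_{j=1}^m\pp(\bar A^j\mid\bigcap_{i<j}\bar A^i\cap\bigcap_{A\in\aaa_2}\bar A)$, and bound each factor below by $1-x_{A^j}$ via the claim --- every event in the conditioning family $\{A^1,\dots,A^{j-1}\}\cup\aaa_2$ is $\not>A^j$ (the $A^i$ by the chosen order, the members of $\aaa_2$ by hypothesis). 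Taking $\aaa_1=\aaa$ and $\aaa_2=\emptyset$ (the constraint being vacuous) yields $\pp(\bigcap_{A\in\aaa}\bar A)\geq\prod_{A\in\aaa}(1-x_A)>0$.

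I expect the main obstacle to be bookkeeping rather than any deep difficulty: at each invocation of (\ref{l.cond}) and of the inductive hypothesis one must verify that the conditioning family contains only events $\not>$ the event being bounded, and check that this property survives the passage from $A$ to its out-neighbors $B_j$ --- which is exactly what the somewhat opaque hypothesis (\ref{l.Dtop}) is engineered to supply. A secondary nuisance is that every conditional probability appearing above must be well defined, which is why the positivity of $\pp(\bigcap_{\SSS}\bar C)$ is carried along in the same induction rather than addressed separately.
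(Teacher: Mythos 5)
Your proof is correct and is essentially the same argument as the paper's: the singleton claim $\pp(A\mid\bigcap_{\SSS}\bar C)\leq x_A$ corresponds to the paper's Case~1 ($\abs{\aaa_1}=1$), the outer telescoping over $\aaa_1$ corresponds to the paper's Case~2 (peeling off a $\leq$-maximal element), and the use of (\ref{l.Dtop}) together with $C\not>A$ to conclude $\SSS_2\not>\SSS_1$ is exactly the paper's key observation. The only organizational difference is that the paper runs a single induction on $\abs{\aaa_1}+\abs{\aaa_2}$ and invokes (\ref{l.full}) directly to lower-bound the denominator, whereas you telescope the denominator and carry the positivity statement as an auxiliary, but this is a cosmetic repackaging of the same induction.
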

\noindent This is just the regular Local Lemma in the case where $A\leq A'$ and $A'\leq A$ for all pairs $A,A'\in \aaa$, or alternatively when all pairs are incomparable.  It is more general since line (\ref{l.cond}) is only required to hold for families $\ccc\sbs \aaa\stm \Gamma(A)$ satisfying $C\not> A$ for all $C\in\ccc$, whereas standard versions of the Local Lemma (for example Theorem \ref{t.lll}) depend on this inequality for \emph{all} families $\ccc\sbs \aaa\stm \Gamma(A)$.

When is condition (\ref{l.Dtop}) satisfied?  One important case is when the dependency graph is a directed interval graph, in which the vertices correspond to intervals of $\N$; the quasi-order $\leq$ on intervals is the natural one induced by the relative positions of the right-endpoints of intervals: $[a,b]\leq [c,d]$ whenever $b\leq d$;  and $I_1\to I_2$ for intervals $I_1$ and $I_2$ if and only if $I_1$ and $I_2$ overlap and $I_2\leq I_1$.  (Note that in this case $\leq$ is in fact a total quasi-order, thus $[a,b]\not>[c,d]$ if and only if $[a,b]\leq [c,d]$.)  This is the case that we will use in all of our results on nonrepetitive sequences.  It seems that essentially the same special case was used by Peres and Schlag in their paper on Lacunary sequences\cite{PS}.

The proof of Theorem \ref{t.ol} is very similar to the standard proofs of the Local Lemma (\emph{e.g.,} those in \cite{pm} or \cite{vutao}.)  In fact, the following proof is essentially identical to these, except for the role of the quasi-order $\leq.$   We will abuse notation slightly by using $\aaa'\not>\aaa''$ to mean that $A'\not>A''$ for \emph{all} pairs $A'\in \aaa'$, $A''\in \aaa''$.

\begin{proof}
  Let $\aaa_1,\aaa_2$ be disjoint families of events from $\aaa$ satisfying $\aaa_2\not> \aaa_1$.  We prove line (\ref{l.full}) by induction on $m=\abs {\aaa_1}+\abs {\aaa_2}$.  We consider two cases.\\
\textbf{Case 1:} $\abs {\aaa_1}=1$.  For this case we need to show that
  \begin{equation}
    \pp\left(A|\bigcap_{A_2\in \aaa_2}\bar A_2\right)\leq x_{A}
    \label{l.case1}
  \end{equation}
  for $\aaa_1=\{A\}$.
  Write $\bbb=\aaa_2\cap \Gamma(A)$, and $\ccc=\aaa_2\stm \Gamma(A)$.
  
  From the identity $\pp(A|B\cap C)=\pp(A\cap B|C)/\pp(B|C)$, we have 
  \begin{equation}
    \pp\left(A|\bigcap_{A_2\in \aaa_2}\bar A_2\right)=
\frac
{ \pp\left(A\cap \bigcap_{B\in \bbb} \bar B| \bigcap_{C\in \ccc} \bar C\right)}
{\pp\left(\bigcap_{B\in \bbb}\bar B|\bigcap_{C\in \ccc}\bar C\right)}.
\label{l.frac}
  \end{equation}
We have that the numerator in (\ref{l.frac}) satisfies 
\begin{equation}
 \pp\left(A\cap \bigcap_{B\in \bbb} \bar B| \bigcap_{C\in \ccc} \bar C\right)\leq \pp\left(A | \bigcap_{C\in \ccc} \bar C\right)\leq x_A\prod_{B\gets A}(1-x_{B}),
\label{l.top}
\end{equation}
where the second inequality follows the condition (\ref{l.cond}).

By the condition (\ref{l.Dtop}) on the graph $G$, we have that $\ccc\not>\bbb$.  Thus, since $\abs {\bbb}+\abs{\ccc}=m-1$, we can apply line \ref{l.full} by induction to conclude that 
\begin{equation}
  \pp\left(\bigcap_{B\in \bbb}\bar B|\bigcap_{C\in \ccc}\bar C\right)\geq \prod_{B\in \bbb}(1-x_{B})\geq \prod_{B\gets A}(1-x_B).
\label{l.bot}
\end{equation}
Applying the bounds from (\ref{l.top}) and (\ref{l.bot}) to the identity in (\ref{l.frac}), we obtain the bound in line (\ref{l.case1}).

\vspace{1em}
\noindent \textbf{Case 2:} We reduce the case where $\abs{\aaa_1}\geq 2$ to the previously handled case of $\abs{\aaa_1}=1$ as follows:  let $\aaa_1=\{A\}\dot\cup \aaa_1'$, where $A$ is a maximal element of $\aaa_1$ under the order $\leq$.  By applying the identity $\pp(A\cap B| C)=\pp(A| B\cap C)\pp(B|C)$, we have 
\begin{equation}
P\left(\bigcap_{A_1\in \aaa_1}\bar A_1| \bigcap_{A_2\in \aaa_2} \bar A_2\right)=
P\left(\bar A|\bigcap_{A'\in \aaa_1'\cup \aaa_2} \bar A'\right)P\left(\bigcap_{A_1'\in \aaa_1'} \bar A_1'|\bigcap_{A_2\in \aaa_2}\bar A_2\right).
\label{l.case2}
\end{equation}
Notice here that, since we have $\aaa_2\not> \aaa_1$, we have as well that $\aaa_2\not> \aaa_1'$, and also $\aaa_1'\cup \aaa_2\not>\{A\}$.  Thus, applying line (\ref{l.case1}) from the previously handled case $\abs{\aaa_1}=1$ to bound the first term of the product in line (\ref{l.case2}), and using line (\ref{l.full}) by induction to bound the second, we get that
\begin{equation}
  P\left(\bigcap_{A_1\in \aaa_1}\bar A_1| \bigcap_{A_2\in \aaa_2} \bar A_2\right)\geq \prod_{A_1\in \aaa_1}(1-x_{A_1}).
\end{equation}
\end{proof}

Apart from the independent application by Peres and Schlag\cite{PS} of this kind of Local Lemma to Lacunary sequences, Grytczuk, Przyby\l{}o, and Zhu have used the Lefthanded Local Lemma, together with restricted sampling, to get near-optimum results for the Thue choice number\cite{Tchoice}.

\section{Thue-type binary sequence games}

\subsection{Long identical intervals can be made far apart}
\label{s.LII}
In this section we prove Theorem \ref{t.2game}.  Armed with the `Lefthanded' version of the Local Lemma, this will be no more difficult than was proving Theorem \ref{t.easy}. 
\begin{proof}
Again, we first consider a finite version where the game consists of just $M$ moves for some $M$.  We fix any strategy for Player 2 and let Player 1 play randomly against that strategy, flipping a coin to choose each of his moves independently.  We let the events $A_{k,\ell,n}\in \aaa$ be defined as in Section \ref{s.easy}, and define the total quasi-order on the events in $\aaa$ by letting $A_{k',\ell',n'}\leq A_{k,\ell,n}$ whenever $\ell'+n'\leq \ell+n$.  We define the graph $G$ now by letting $A_{k_0,\ell_0,n_0}\to A_{k,\ell,n}$ whenever $\ell+n\leq \ell_0+n_0$ and $[\ell+1,\ell+n]$ intersects $[\ell_0+1,\ell_0+n_0]$.  (Observe that this graph actually has significantly \emph{fewer} edges than the one used in the proof of Theorem \ref{t.easy}, since we only need to worry about overlaps `in one direction').

  With this setup, notice that for any set $\ccc\sbs \aaa\stm \Gamma(A_{k_0,\ell_0,n_0})$ such that $C\leq A_{k_0,\ell_0,n_0}$ for all $C\in \ccc$, the events in $\ccc$ do not affect the probabilities of coin flips made by Player 1 to choose his moves in the interval $[\ell_0+1,\ell_0+n]$; thus, we have 
\[
\pp\left(A_{k_0,\ell_0,n_0}|\bigcap_{C\in \ccc}\bar C\right)\leq 2^{-(n-1)/2}.
\]
Since the graph $G$ we define for this application is a proper subgraph of the graph used for the proof of Theorem \ref{t.easy}, the calculation in line (\ref{l.ecalc}) shows that the Lefthanded Local Lemma applies with the assignment $x_{k,\ell,n}=\frac{1}{f(n)n^3}$, where again $f(n)=(2-\ep)^{(n-1)/2}$.  (In fact, the smaller graph $G$ in this case allows the assignment  $x_{k,\ell,n}=\frac{1}{f(n)n^2}$.)  Thus the Lefthanded Local Lemma shows that, with positive probability, Player 1 defeats Player 2 \emph{regardless of the strategy chosen by Player 2}.   Since this implies that Player 2 has no winning strategy and the game is finite, Player 1 has a winning strategy.  A straightforward compactness argument implies that Player 1 has a winning strategy in the infinite version of the game.  Since it is unusual to prove the existence of a winning strategy by compactness, we give the whole argument.

Let $G_M$ denote the sequence game discussed above, played for $M$ moves.  Given first-player strategies $s$ for the game $G_M$ and $s'$ for the game $G_{M'}$, $M'>M$, we say that $s$ is an \emph{initial strategy} of $s'$ if the two strategies always agree during the first $M$ moves of any game.  

 Let $S_{M,\ep}$ be the set of all strategies $s$ for Player 1 in the game $G_M$ which are `winning' strategies for Player 1 for the fixed value $\ep$. In the case of Theorem \ref{t.2game}, we mean that for any strategy $s\in S_{M,\ep}$ there exists an $N_{\ep,s}$ so that in any sequence resulting from the game $G_M$ where Player 1 plays with the strategy $s$, any identical blocks of lengths $n>N_{\ep,s}$ are separated by at least the distance $(2-\ep)^{\frac n 2}$.  Our proof above for the finite version of Theorem \ref{t.2game} implies that for every $M$ and $\ep>0$, $S_{M,\ep}$ is nonempty.  

Fix some $\ep>0$ and consider $\bigcup_{M=0}^\infty S_{M,\ep}$ as the vertices of a tree where an element $s\in S_{M,\ep}$ is joined to an element $s'\in S_{M+1,\ep}$ whenever $s$ is an initial strategy of $s'$.  This tree has finite degree, and it has infinitely many vertices, by our proof above all the $S_{M,\ep}$'s are nonempty.  Thus K\"onig's Infinity Lemma implies that there is a sequence of strategies $s_1,s_2,s_3,\dots$ with $s_i\in S_{i,\ep}$ and such that $s_i$ is always an initial strategy of $s_{j}$ whenever $i<j$.  To play with a winning strategy in the infinite sequence game $G_\infty$, Player 1 makes his 1st move according to the strategy $s_1$, his second move (the third move of the game) with the strategy $s_3$, and in general, makes his $k$th move, the $(2k-1)$st move of the game, according to strategy $s_{2k-1}$.  The fact that $s_i$ is an initial strategy of $s_j$ for $i<j$ implies that every move he makes is made consistent with all strategies he will ever play with.  This implies that no bad intervals can show up in the first $M$ moves of play for any $M$.  Thus no `bad pairs' of intervals can show up at all, and Theorem \ref{t.2game} is proved.
\end{proof}

The proof of Theorem \ref{t.tgame} is very similar to that of Theorem \ref{t.2game}, and we omit it.\qed

\subsection{Adjacent intervals can be made very different}
In this section we prove Theorem \ref{t.agame}.  The proof is is hardly changed from the proof of the Exercise in \cite{pm} which motivates it, apart from making use of the Lefthanded version of the Local Lemma.  
\begin{proof}
Again we begin by restricting to a finite number of moves $M$.  We construct a family of events $\aaa=\{A_{k,n}\}$ ($n>N_{\ep,t}$, $0\leq k\leq M-n$) by letting  $A_{k,n}$ denote the event that the adjacent blocks $\alpha_{k+1,},\dots,\alpha_{k+n}$ and $\alpha_{k+n+1},\dots,\alpha_{k+2n}$ agree in $\geq (1-\frac{1}{2t+2}+\ep)n$ places.  We define a total quasi-order $\leq$ on $\aaa$ by letting $A_{k,n}\leq A_{k',n'}$ whenever $k+2n\leq k'+2n'$, and define the dependency digraph $G$ by letting $A_{k_0,n_0}\to A_{k,n}$ whenever $A_{k,n}\leq A_{k_0,n_0}$ and $[k+1,k+2n]$ intersects $[k_0+n_0+1,k_0+2n_0]$.  Since a family $\ccc\sbs \aaa\stm \Gamma(A_{k_0,n_0})$, $\ccc\leq A_{k_0,n_0}$ consists of events which are independent of all the at least $\lfloor\frac {n_0}{t+1}\rfloor$ coin flips used by Player 1 to determine his moves in the block $\alpha_{k_0+n_0+1},\dots,\alpha_{k_0+2n_0}$, we have that 
\diffblock{\begin{multline}
\label{l.acs}
\pp\left(A_{k_0,n_0}|\bigcap_{C\in \ccc}\bar C\right)=
P\left(A_{k_0,n_0}\right)\leq 
\frac{1}{2^{\lfloor {n_0}/({t+1})\rfloor}}\sum_{j=\clg{(\frac 1 {2t+2}+\ep){n_0}}}^{\lfloor n_0/(t+1)\rfloor}\binom{\lfloor\frac {n_0}{t+1}\rfloor}{j}\\\leq 
\frac{ {n_0}/(2t+2)}{2^{\lfloor {n_0}/({t+1})\rfloor}} \binom{\lfloor\frac {n_0}{t+1}\rfloor}{\clg{(\frac 1 {2t+2}+\ep){n_0}}}<
\frac {n_0} {2^{{n_0}/(t+1)}}\binom{\lfloor\frac {n_0}{t+1}\rfloor}{\clg{(\frac 1 {2t+2}+\ep){n_0}}}.
\\
\end{multline}}
For any $\ep_0>0$, there exists a $\alpha<1$ (\emph{e.g.}, $\alpha=1/\sqrt{1+4\ep_0^2}$) such that 
\begin{equation}
 \binom{N}{\clg{(\frac 1 2+\ep_0)N}}\leq (\alpha 2)^N
\end{equation}
  Letting $N=\frac{n_0}{t+1}$ and $\ep_0=(t+1)\ep$, we get (from line (\ref{l.acs})) that 
\begin{equation}
\pp\left(A_{k_0,n_0}|\bigcap_{C\in \ccc}\bar C\right)<
\frac {n_0} {2^{{n_0}/(t+1)}}\binom{{n_0}/(t+1)}{\clg{(\frac 1 {2t+2}+\ep){n_0}}}\leq
n_0 \alpha^{n_0/(t+1)}
\label{l.adjpbound}
\end{equation}
for a constant $\alpha<1$.
\noindent On the other hand, letting $x_{A_{k,n}}=x_{k,n}=b^{n}$ ($0<b<1$ will be specified later), we have that 
\diffblock{\begin{multline}  
  x_{A_{k_0,n_0}}\hspace{-3.5ex}\prod_{A_{k,n}\gets A_{k_0,n_0}}\hspace{-3ex}(1-x_{A_{k,n}})>
x_{k_0,n_0}\hspace{-1ex}\prod_{n=N_{\ep,t}}^\infty \prod_{k=k_0-n}^{k_0+n_0-n}\hspace{-1ex}(1-x_{k,n})\\=
b^{n_0}\hspace{-1ex}\prod_{n=N_{\ep,t}}^\infty \left(1-b^{n}\right)^{n_0}\geq
b^{n_0}\left(1-\sum_{n=N_{\ep,t}}^\infty b^{n}\right)^{n_0}
\label{l.bprodin}
\end{multline}}
And now choosing $b$ between $\alpha^{1/(t+1)}$ and 1 and letting $N_{\ep,t}$ be sufficiently large, lines (\ref{l.adjpbound}) and (\ref{l.bprodin}) give us that
\begin{equation}
 x_{A_{k_0,n_0}}\hspace{-3.5ex}\prod_{A_{k,n}\gets A_{k_0,n_0}}\hspace{-3ex}(1-x_{A_{k,n}})>
P\left(A_{k_0,n_0}|\bigcap_{C\in \ccc} \bar C\right),
\end{equation}
for any $\ccc\sbs \aaa\stm \Gamma(A)$ such that $\ccc\leq A$.  Thus the Lefthanded Local Lemma applies, and we have that $P\left(\bigcap_\aaa \bar A_{k,n}\right)>0$; thus regardless of the choice of strategy for Player 2, it cannot be a winning strategy and thus Player 1 has a winning strategy.   Compactness implies he has a winning strategy in the infinite version of the game.
\end{proof}

We point out here that it is actually possible to have a theorem which combines Theorems \ref{t.tgame} and \ref{t.agame}; indeed, both for the original game-free results which motivated them, and these theorems, examining the proofs shows that both types of `bad events' can be avoided simultaneously.

\section{$c$-ary nonrepetitive sequence games}
\label{s.rainbow}
Beck's Theorem \ref{t.beck} and the Alon-Spencer exercise both imply the existence of strictly nonrepetitive sequences (no consecutive identical blocks of \emph{any} lengths) of sufficiently large base.  Their game-theoretic analogs (Theorems \ref{t.2game} and \ref{t.agame}), however, do not imply that Player 1 can force the production of a nonrepetitive sequence in the $c$-ary sequence game for any $c$---and, indeed, Player 2 can certainly produce lots of identical adjacent pairs of blocks of length 1 just by mimicking Player 1's moves.  Something along the lines suggested here does hold, however.  In fact, we can prove a game-theoretic analog of the following theorem of Grytczuk\cite{rainbow}, whose proof uses the Local Lemma:
\begin{theorem}[Grytczuk]
  Let $k\geq 2$ be a fixed integer.  There is a $[c]$-coloring $\chi$ of $\N$, $c\leq \frac 1 2 e^{k(4k-2)/(k-1)^2}k^2(k-1)$, such that for every $r\geq 1$, every block of length $kr$ contains a $k$-term rainbow arithmetic progression of difference $r$.  In particular, among any $k$ consecutive blocks of the same length in the sequence $\chi(1)\chi(2)\chi(3)\dots$, no two are identical.\qed
\label{t.gry}
\end{theorem}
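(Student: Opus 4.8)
The plan is to realize $\chi$ by a random coloring, governed by the general Lov\'asz Local Lemma (Theorem~\ref{t.lll}, in the mutually independent case noted after its statement), and to pass from finite initial segments of $\N$ to all of $\N$ by a compactness argument; the ``in particular'' clause then comes essentially for free. Color each integer independently and uniformly at random from $[c]$. The key structural observation is that a block $I=[a+1,a+kr]$ of length $kr$ contains \emph{exactly} $r$ arithmetic progressions of difference $r$ and length $k$ — those with first term $a+t$, $1\le t\le r$ — and that these $r$ progressions use pairwise disjoint position sets (one residue class modulo $r$ apiece) whose union is all of $I$. Let $A_{a,r}$ be the bad event that none of these $r$ progressions is rainbow, and let $\aaa$ be the family of all such events. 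Because the $r$ progressions are disjoint and the colors independent, $\pp(A_{a,r})=p^r$, where $p=1-\prod_{i=1}^{k-1}(1-i/c)$ is the probability that a single length-$k$ progression repeats a color; a union bound over the $\binom{k}{2}$ pairs of terms gives $p\le \frac{k(k-1)}{2c}$. The sequence $\chi(1)\chi(2)\dots$ has the required property on $[M]$ precisely when no event $A_{a,r}$ with $a+kr\le M$ occurs.

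Next I would set up the dependency graph and run the Local Lemma. Since $A_{a,r}$ is determined by the colors in $I=[a+1,a+kr]$, it is mutually independent of every collection of bad events whose intervals avoid $I$, so we let $G$ join $A_{a,r}\to A_{a',r'}$ exactly when $[a'+1,a'+kr']$ meets $I$; for each fixed $r'$ there are $kr+kr'-1<k(r+r')$ such events. The crucial calibration is to take the weights $x_{A_{a,r}}=k^{-r}$. Since $k^{-r'}\le \tfrac12$ for $k\ge 2$, the elementary bound $1-x\ge e^{-2x}$ gives
\begin{multline*}
x_{A_{a,r}}\prod_{B\gets A_{a,r}}(1-x_B)\ \ge\ k^{-r}\prod_{r'\ge 1}\bigl(1-k^{-r'}\bigr)^{k(r+r')}\\ \ge\ k^{-r}\exp\!\Bigl(-2k\!\sum_{r'\ge 1}(r+r')k^{-r'}\Bigr)=k^{-r}\exp\!\Bigl(-\tfrac{2kr}{k-1}-\tfrac{2k^2}{(k-1)^2}\Bigr),
\end{multline*}
using $\sum_{r'\ge 1}k^{-r'}=\tfrac{1}{k-1}$ and $\sum_{r'\ge 1}r'k^{-r'}=\tfrac{k}{(k-1)^2}$. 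Thus condition~(\ref{l.lll}) amounts to $\bigl(pk\,e^{2k/(k-1)}\bigr)^r\le e^{-2k^2/(k-1)^2}$ for every $r\ge 1$; as the right-hand side is below $1$, the binding case is $r=1$, so it suffices that $p\le \frac1k\exp\!\bigl(-\tfrac{2k}{k-1}-\tfrac{2k^2}{(k-1)^2}\bigr)=\frac1k e^{-k(4k-2)/(k-1)^2}$, where I have simplified $\tfrac{2k}{k-1}+\tfrac{2k^2}{(k-1)^2}=\tfrac{4k^2-2k}{(k-1)^2}$. Combining this with $p\le\frac{k(k-1)}{2c}$, any $c\ge\frac12 k^2(k-1)\,e^{k(4k-2)/(k-1)^2}$ meets the hypotheses of Theorem~\ref{t.lll} — exactly the bound claimed. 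The Local Lemma then gives, for every $M$, a coloring of $[M]$ with no bad event, and a routine K\"onig's-lemma compactness argument over colorings of initial segments yields the coloring $\chi$ of all of $\N$.

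Finally I would deduce the ``in particular'' statement from the same partition idea. Suppose $k$ consecutive blocks of a common length $r$ occupy $[a+1,a+kr]$, with the $j$-th being $[a+(j-1)r+1,a+jr]$, and suppose blocks $j_1<j_2$ were identical. Then for every $t\in\{1,\dots,r\}$ the colors at positions $a+(j_1-1)r+t$ and $a+(j_2-1)r+t$ coincide, and both of these positions lie on the progression $a+t,\,a+t+r,\dots,a+t+(k-1)r$ of difference $r$ inside the block; hence that progression is not rainbow. Since this holds for all $r$ values of $t$, the block $[a+1,a+kr]$ contains no rainbow progression of difference $r$, contradicting the property of $\chi$.

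The proof has no deep obstacle; its entire content is the middle step, where the difficulty is purely one of calibration: one must pick the weight $x_{A_{a,r}}=k^{-r}$ so that $\sum_{r'}k^{-r'}$ and $\sum_{r'}r'k^{-r'}$ are precisely the rational functions of $k$ appearing in the target exponent, and use $1-x\ge e^{-2x}$ (valid because $k^{-r'}\le\tfrac12$) rather than a cruder estimate; with those choices the constants collapse to $\tfrac12 k^2(k-1)e^{k(4k-2)/(k-1)^2}$ and the worst case of the Local Lemma inequality is simply $r=1$. Everything else — the probability computation, the neighbor count, and the compactness step — is standard bookkeeping.
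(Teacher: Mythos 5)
Your proof is correct and recovers exactly the constant stated. Be aware, though, that the paper does not prove Theorem~\ref{t.gry}; it cites Grytczuk~\cite{rainbow} and marks the statement with a \qed. The closest the paper comes to a proof is its argument for the game-theoretic analog, Theorem~\ref{t.grain}, which follows the same template (events $A_{\ell,r}$ indexed by intervals of length $kr$, a geometric weight in $r$, a neighbor count linear in $r+r'$, and a final optimization of the weight base) but differs in the details forced by the game: it works only for $r\geq 2$, it counts only the $\lfloor r/2\rfloor$ progressions (or progression pairs) controlled by Player~1 so the bad-event probability and the weights are geometric in $\lfloor r/2\rfloor$ rather than in $r$, it invokes the Lefthanded Local Lemma rather than the ordinary one, and it bounds the infinite product via Euler's pentagonal-number estimate for $\phi(a_k)$ rather than via $1-x\geq e^{-2x}$. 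Your choices — disjointness of the $r$ progressions inside a length-$kr$ block to get $\pp(A_{a,r})=p^r$, weights $k^{-r}$, the closed forms $\sum r'k^{-r'}=k/(k-1)^2$ and $\sum k^{-r'}=1/(k-1)$, and observing that $r=1$ is the binding case — are exactly the calibration needed to make the exponent collapse to $k(4k-2)/(k-1)^2$ and the prefactor to $\tfrac12 k^2(k-1)$, so the argument is complete and achieves the cited bound. One cosmetic point: your product over $B\gets A_{a,r}$ implicitly includes the diagonal term $a'=a,\,r'=r$; since that only decreases your lower bound on the product, it is harmless, but the clean statement of the Local Lemma excludes $A$ itself from its out-neighborhood.
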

Here a $[c]$-coloring is an assignment $\N\to \{1,2,\dots,c\}$, and a \emph{rainbow arithmetic progression} is an arithmetic progression all of whose terms get different colors.  Note that the conclusion in Grytczuk's theorem regarding consecutive blocks now follows (with best possible $c=k+1$) from the recent proofs \cite{crD,rD} of Dejean's conjecture.

 For our game-theoretic version, we cannot expect Player 1 to always be able to force the construction of rainbow arithmetic progressions, since for all odd $r$, essentially half of the terms of any arithmetic progression of difference $r$ are controlled by Player 2 (and so may all be the same color, for example).  For odd $r$, Player 1 will instead create \emph{prismatic pairs} of arithmetic progressions.  A prismatic pair of $k$-term  arithmetic progressions $\alpha_1,\alpha_2,\dots,\alpha_k$ and $\beta_1,\beta_2,\dots,\beta_k$  (with respect to a $c$-coloring $\chi$) is a pair for which we have $\beta_i=\alpha_i+1$ for all $1\leq i\leq k$, and $\chi(\alpha_i)\neq \chi(\alpha_j)$ for all $\alpha_i<\alpha_j$, $\alpha_j$ odd, and similarly, $\chi(\beta_i)\neq \chi(\beta_j)$ for all $\beta_i<\beta_j$, $\beta_j$ odd. 

The important thing about prismatic pairs of arithmetic progressions is that they are essentially as useful as rainbow arithmetic progressions from the standpoint of consecutive blocks:
\begin{observ}
  Under any coloring of the natural numbers, any interval $I$ of length $kr$, $(r\geq 2)$ containing a prismatic pair of $k$-term arithmetic progressions of difference $r$ has the property that no two of the $k$ consecutive intervals of length $r$ which make up $I$ are identical.\qed
\end{observ}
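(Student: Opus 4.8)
The plan is to argue by contradiction, after setting up coordinates that make the "prismatic" hypothesis bite directly on pairs of blocks. Write $I=[a+1,a+kr]$ and let $B_1,\dots,B_k$ be its $k$ consecutive length-$r$ subintervals, so $B_i=[a+(i-1)r+1,a+ir]$, and let $\chi$ be the given coloring. First I would record where the prismatic pair sits: since $\alpha_1<\alpha_2<\dots<\alpha_k$ is a $k$-term arithmetic progression of difference $r$ contained in $I$, it forces $\alpha_1=a+j_0$ for some offset $1\le j_0\le r$, and then $\alpha_i=a+(i-1)r+j_0$ lies in $B_i$ at offset $j_0$. Because $\beta_i=\alpha_i+1$ also lies in $I$, and $r\ge 2$, one checks that in fact $j_0\le r-1$, so that $\beta_i=a+(i-1)r+(j_0+1)$ lies in $B_i$ at offset $j_0+1\le r$. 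The upshot of this bookkeeping is that $\alpha_i$ and $\beta_i$ occupy the \emph{same} two offsets inside every block $B_i$.

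Now suppose, for contradiction, that $B_s=B_t$ for some $s<t$. Reading off the symbols at offsets $j_0$ and $j_0+1$, the equality $B_s=B_t$ gives $\chi(\alpha_s)=\chi(\alpha_t)$ and $\chi(\beta_s)=\chi(\beta_t)$. But $\alpha_s<\alpha_t$, hence also $\beta_s<\beta_t$, so the defining condition of a prismatic pair applies to whichever of these is the larger-and-odd term: if $\alpha_t$ is odd then $\chi(\alpha_s)\ne\chi(\alpha_t)$; if $\alpha_t$ is even then $\beta_t=\alpha_t+1$ is odd and $\chi(\beta_s)\ne\chi(\beta_t)$. Either case contradicts the equalities just derived. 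Hence no two of the blocks $B_1,\dots,B_k$ coincide, which is exactly the claim.

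I do not expect a genuine obstacle here: the content of the Observation is essentially the translation of "prismatic" into "separates blocks", and the proof is elementary. The one point that needs care is the index/offset bookkeeping, and in particular noticing that the hypothesis $r\ge 2$ (together with $\beta\subseteq I$) is precisely what guarantees $j_0\le r-1$, so that $\alpha_i$ and $\beta_i$ fall in the \emph{same} block $B_i$; if $j_0$ could equal $r$, then $\beta_i$ would slip into $B_{i+1}$ and the comparison at offset $j_0+1$ would no longer be a comparison within a single pair of blocks. Once that is pinned down the rest is a two-line case split on the parity of $\alpha_t$.
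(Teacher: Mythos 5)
Correct. The paper treats this Observation as immediate and supplies no proof; your argument---pinning down the common offset $j_0$ at which each $\alpha_i$ sits inside $B_i$, using $r\geq 2$ together with $\beta_k\in I$ to force $j_0\leq r-1$ so that $\beta_i=\alpha_i+1$ stays in the \emph{same} block $B_i$, and then splitting on which of $\alpha_t,\beta_t$ is odd---is precisely the intended reasoning and is sound.
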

\noindent We are now ready for our game-theoretic version of Grytczuk's theorem.
\begin{theorem}
  For any fixed $k\geq 2$, there is some $C_k$ (\emph{e.g,} $C_2=37$, $C_k\lesssim 3ek^3$), such that for any integer $c\geq C_k$, Player 1 has a strategy in the $c$-ary sequence game  which ensures that for every $r\geq 2$, every block of length $\geq kr$ contains either a $k$-term rainbow arithmetic progression of difference $r$ (if $r$ is even) or a prismatic pair of $k$-term arithmetic progressions of difference $r$ (if $r$ is odd).  In particular, among any $k$ consecutive blocks of any equal length $r\geq 2$ in the sequence resulting from gameplay, no two are identical.
\label{t.grain}
\end{theorem}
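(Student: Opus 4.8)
The plan is to follow the template established for Theorems~\ref{t.2game} and~\ref{t.agame}: restrict to a finite game $G_M$, let Player~1 play each of his moves by an independent uniform choice from $[c]$, define a family of bad events capturing failure of the rainbow/prismatic condition, verify the hypotheses of the Lefthanded Local Lemma (Theorem~\ref{t.ol}) using a directed interval graph, conclude Player~2 has no winning strategy in $G_M$, and finally pass to the infinite game by the König's Lemma compactness argument spelled out in the proof of Theorem~\ref{t.2game}. By the Observation, it suffices to guarantee the rainbow progressions (for even $r$) and prismatic pairs (for odd $r$); the statement about consecutive blocks then follows formally.

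First I would set up the bad events. For a $k$-term arithmetic progression $P$ of difference $r$ starting at position $a$ (so $P = \{a, a+r, \dots, a+(k-1)r\}$), when $r$ is even let $B_P$ be the event that $P$ is \emph{not} rainbow, i.e.\ $\chi(x)=\chi(y)$ for some two distinct $x,y\in P$; when $r$ is odd, let $B_P$ be the event that neither $P$ nor its shift $P+1$ forms the ``prismatic'' half of a prismatic pair, which unwinds to: there exist $\alpha_i<\alpha_j$ in $P$ with $\alpha_j$ odd and $\chi(\alpha_i)=\chi(\alpha_j)$, \emph{or} there exist $\beta_i<\beta_j$ in $P+1$ with $\beta_j$ odd and $\chi(\beta_i)=\chi(\beta_j)$. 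In either case, to each such event $B_P$ I associate the interval $I_P = [a,\, a+(k-1)r+1]$ (the convex hull of $P\cup(P+1)$), order the events by right-endpoint of these intervals as in the directed interval graph described after Theorem~\ref{t.ol}, and put $B_{P_0}\to B_P$ exactly when $I_P$ overlaps $I_{P_0}$ and $I_P$ has right-endpoint $\le$ that of $I_{P_0}$. Condition~(\ref{l.Dtop}) holds automatically because this is a directed interval graph. The crucial probabilistic point — the analogue of $\pp(A_{k_0,\ell_0,n_0}|\bigcap\bar C)=\pp(A_{k_0,\ell_0,n_0})$ in the earlier proofs — is that for $\ccc\subseteq\aaa\setminus\Gamma(B_{P_0})$ with every $C\le B_{P_0}$, the events in $\ccc$ depend only on colors of positions $\le$ the left-endpoint region already fixed, hence are independent of the coin flips Player~1 uses inside $I_{P_0}$, and among the $k$ (or $k$-ish) positions of $P_0$ at least one is an odd position chosen freely by Player~1, so one gets an unconditional bound on $\pp(B_{P_0})$ for free.

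The heart of the calculation is bounding $\pp(B_P)$. For even $r$ and a rainbow progression: by a union bound over the $\binom{k}{2}$ pairs, $\pp(B_P)\le \binom{k}{2}/c$ — but this is too weak to beat the product side, so instead I would reveal the colors of the $k$ positions one at a time and bound the chance each new one collides with an earlier one, getting $\pp(P \text{ not rainbow}) \le 1 - \prod_{i=0}^{k-1}(1-i/c) $; more usefully, using that Player~1 controls at least $\lceil k/2\rceil$ of the positions uniformly (for odd $r$; for even $r$ one must be more careful since parity along the progression is constant, so Player~1 may control \emph{all} or \emph{none}, but when $r$ is even, $P+1$ and prismatic pairs are not needed precisely because one can instead exploit that... wait — for even $r$, all terms of $P$ have the same parity, so if that parity is even, Player~2 controls the whole progression). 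This is the subtlety the prismatic-pair notion is designed around, and I must handle it by noting that for even $r$ Player~1's guarantee should really be stated for progressions lying on odd positions, or else the theorem's hypothesis $r\ge 2$ combined with $k$ consecutive blocks must be re-examined. Assuming the event is correctly set so that $\Omega(k)$ of the relevant positions are Player~1's coin flips, one gets $\pp(B_P)\le \binom{k}{2}\cdot \frac{1}{c}\cdot(\text{something})$ or, after the careful count, a bound of the form $\pp(B_P) \le k^2 \beta^{r}$ for a constant $\beta<1$ once $c\ge C_k$, plus a geometric-in-$r$ factor that makes the infinite product over $r$ converge. I would set $x_{B_P} = \lambda^{r}$ for suitable $\lambda<1$ and check
\[
\pp(B_P) \;\le\; x_{B_P}\!\!\prod_{B_{P'}\gets B_P}(1-x_{B_{P'}})
\]
by the same style of estimate as in~(\ref{l.ecalc}) and~(\ref{l.bprodin}): for fixed $r'$ there are $O(k(r+r'))$ progressions $P'$ of difference $r'$ whose interval meets $I_P$, so the product is at least $\prod_{r'}(1-\lambda^{r'})^{O(k(r+r'))}$, which for $c$ (hence the room in $\lambda$) large enough in terms of $k$ exceeds the probability bound. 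Optimizing the constants — balancing the collision probability $\Theta(k^2/c)$ against the loss in the product — is where the explicit $C_k \lesssim 3ek^3$ and the sharp value $C_2=37$ come from, and this optimization (as in the $c=37$ remark for Theorem~\ref{t.grain} and the $429$ in Theorem~\ref{t.gpattern}) is the main technical obstacle; everything else is bookkeeping identical to the earlier proofs. Finally, the Local Lemma gives $\pp(\bigcap_{P}\bar B_P)>0$ in $G_M$, so Player~2 has no winning strategy, so Player~1 does; König's Lemma upgrades this to the infinite game exactly as in Section~\ref{s.LII}.
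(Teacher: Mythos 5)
Your proposal has the right shell (finite truncation, random play against a fixed Player 2 strategy, directed interval graph for the Lefthanded Local Lemma, K\"onig compactness), but it has a genuine gap at the core of the probability estimate, and the gap is exactly where you hesitated in the middle of the argument.

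You define a bad event $B_P$ for each \emph{individual} $k$-term arithmetic progression $P$. But the probability that a single fixed progression fails to be rainbow (or that a single fixed consecutive pair fails to be prismatic) is on the order of $\binom{k}{2}/c$ --- a constant that does \emph{not} decay in $r$. You then assign weights $x_{B_P}=\lambda^r$ and note that the product $\prod_{B'\gets B_P}(1-x_{B'})$ is forced to be exponentially small in $r$, because the interval $I_P$ of length $\approx kr$ meets $\Theta(k(r+r'))$ progressions of each difference $r'$. This is a fatal mismatch: the left side of the Local Lemma inequality is $\Theta(k^2/c)$ while the right side is $\lambda^r \cdot e^{-\Theta(r)}$, and for large $r$ the inequality must fail for any fixed $c$. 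The ``geometric-in-$r$ factor'' you invoke for $\pp(B_P)$ has no source in your setup; you write $\pp(B_P)\le k^2\beta^r$ but there is nothing in the event $B_P$ that makes its probability shrink as $r$ grows.

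The paper's events are defined on \emph{intervals}, not on individual progressions: $A_{\ell,r}$ is the event that the whole block $I_{\ell,r}=[\ell+1,\ell+kr]$ contains \emph{no} rainbow $k$-AP of difference $r$ (for $r$ even) or no prismatic pair (for $r$ odd). That block contains $r$ arithmetic progressions of difference $r$, and --- this resolves your parity worry --- for even $r$ exactly $\lfloor r/2\rfloor$ of them lie entirely on odd positions, hence are entirely Player~1's coin flips, and they are pairwise disjoint; for the interval event to occur, \emph{all} of those must simultaneously fail to be rainbow. Since these failures involve disjoint sets of independent coins, they multiply, giving $\pp(A_{\ell_0,r_0}|\cdot)\le \bigl(\binom{k}{2}/c\bigr)^{\lfloor r_0/2\rfloor}$. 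That exponent $\lfloor r_0/2\rfloor$ is the geometric decay you need, and it comes from aggregating $\Theta(r)$ independent bad coincidences into one event, not from any single progression. (For odd $r$ the same count uses $\lfloor r/2\rfloor$ disjoint consecutive pairs and the prismatic condition.) With $x_{A_{\ell,r}}=a_k^{\lfloor r/2\rfloor}$ and $\lesssim kr_0$ earlier-or-overlapping intervals of each difference, the product side also decays like $\phi(a_k)^{\Theta(kr_0)}$, and the two sides balance. You noticed the even-parity obstacle and nearly saw that the interval, not the progression, is the right unit, but the proposal as written never makes that move, and without it the Local Lemma hypothesis cannot be verified.
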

Observe that the smallest case $k=2$ of Theorem \ref{t.grain} is a game-theoretic analog to Thue's original theorem on nonrepetitive sequences.

The proof of Theorem \ref{t.grain} is very similar to that of Grytczuk's theorem except we must apply the Lefthanded version of the Local Lemma, and we must be content to find prismatic pairs of arithmetic progressions when we are not guaranteed to find a rainbow arithmetic progression.

\begin{proof}  As usual, we begin by fixing some finite $M$ and will first prove that Player 1 has a suitable strategy in the finite game, played for just $M$ moves.   Fixing any deterministic strategy $\sigma_2$ for Player 2, Player 1 chooses each of his moves from $[c]$ randomly and independently (on each turn, any choice has probability $\frac 1 c$).  For even $r\geq 2$, we let the event $A_{\ell,r}$ denote the event that, after $M$ moves of play, the interval $I_{\ell,r}=[\ell+1,\ell+kr]$ contains no $k$-term arithmetic progression of difference $r$, and for odd $r>2$, we let the event $A_{\ell,r}$ denote the event that the interval $I_{\ell,r}$ contains no prismatic pair of $k$-term arithmetic progressions of difference $r$.  We define a total quasi order $\leq$ on the set $\aaa$ of events $A_{\ell,r}$ by letting $A_{\ell,r}\leq A_{\ell',r'}$ whenever $\ell+kr\leq \ell'+kr'$, and define the dependency graph by letting $A_{\ell',r'}\to A{\ell,r}$ whenever $A_{\ell,r}\leq A_{\ell',r'}$ and the intervals $I_{\ell,r}$ and $I_{\ell',r'}$ overlap.   Fix some event $A_{\ell_0,r_0}$, and let $\ccc$ be a family of events $A_{\ell,r}\leq A_{\ell_0,r_0}$ nonadjacent to $A_{\ell_0,r_0}$. 

We claim we have
  \begin{equation}
    P\left(A_{\ell_0,r_0}|\bigcap_{C\in \ccc}\bar C\right)\leq \frac 1 {c^{\flr{\frac {r_0} 2}}}\binom{k}{2}^{\flr{\frac {r_0} 2}}.
\label{l.PAupp}
  \end{equation}
Observe that $I_{\ell_0,r_0}$ contains $r_0$ $k$-term arithmetic progressions of difference $r_0$.  In the case where $r_0$ is even, $\flr{\frac {r_0} 2}=\frac {r_0} 2$ of these consist entirely of elements whose colors are chosen by Player 1.  For none of these to be rainbow progressions, each must have a pair of elements which get the same color.  Since Player 1 makes his choices of colors independently of previous moves made in the game, we have for a fixed pair of elements that the probability is $\frac 1 c$ that they get the same color (even conditioning on the event $\bigcap_{C\in \ccc}\bar C$), and there are $\binom{k}{2}$ such pairs for each progression, giving the upper bound in line (\ref{l.PAupp}) when $r_0$ is even.

For the case where $r_0$ is odd, observe that we can group the $r_0$ $k$-term arithmetic progressions of difference $r_0$ in $I_{\ell_0,r_0}$ into at least $\flr{\frac {r_0} 2}$ consecutive pairs.  A pair of consecutive $k$-term arithmetic progressions $\alpha_1,\dots,\alpha_k$ and $\beta_1,\dots \beta_k$, $\beta_i=\alpha_i+1$ is prismatic unless we have either that $\chi(\alpha_i)= \chi(\alpha_j)$ for some $\alpha_i<\alpha_j$ and $\alpha_j$ odd, or that $\chi(\beta_i)= \chi(\beta_j)$ for some $\beta_i<\beta_j$ and $\beta_j$ odd.  There are $\binom{k}{2}$ possible such pairs, since each pair $(i,j)$ ($1\leq i<j\leq k$) corresponds to exactly one of these pair-types.  Since Player 1 chooses the color of odd $\alpha_i$'s and $\beta_i$'s independently of all previous moves in the game, each pair has probability $\frac 1 c$ of being monochromatic.  Thus we have the upper bound in line (\ref{l.PAupp}) when $r_0$ is odd as well.

For any fixed interval $I_{\ell_0,r_0}$ and any fixed $r$, observe that there are at most $\abs{I_{\ell_0,r_0}}=kr_0$ intervals $I_{\ell,r}$ which intersect $I_{\ell_0,r_0}$ and come before it ($\ell+kr\leq \ell_0+kr_0$).  We set $x_{A_{\ell,r}}=x_{r}=a_k^{\flr{r/2}}$, where $a_k\leq \frac 1 k$ is a constant (depending on $k$) to be specified later.  (From an asymptotic point of view, $a_k=\frac 1 k$ is essentially the best choice, but we are especially interested in the case $k=2$.)  We have
\diffblock{\begin{multline}
  x_{A_{\ell_0,r_0}}\hspace{-2ex}\prod_{B\gets A_{\ell_0,r_0}}\hspace{-2ex}(1-x_B)\geq 
a_k^{\flr{r_0/2}}\prod_{r=2}^{\infty}\left(1-a_k^{\flr{r/2}}\right)^{kr_0}\hspace{-1ex}=
a_k^{\flr{r_0/2}} \prod_{j=1}^\infty \left(1-a_k^j\right)^{2kr_0}
.
\label{l.rainxbound}
\end{multline}}
Let now 
\begin{equation}
C_k= \binom{k}{2} \phi(a_k)^{-6k} a_k^{-1},
\label{l.ck}
\end{equation}
where $\phi(a_k)=\prod_{j=1}^\infty \left(1-a_k^j\right)$ is Euler's q-series for $q=a_k$.  (Note that $\phi(\frac 1 k)^{-k}\to e$, so $\phi(a_k)^{-6k}$ is essentially playing the role of a constant in this expression.) 
For any integer $c\geq C_k$, we have
\begin{equation}
 P\left(A_{\ell_0,r_0}|\bigcap_{C\in \ccc}\bar C\right)\leq 
\frac 1 {{c}^{\flr{\frac {r_0} 2}}}\binom{k}{2}^{\flr{\frac {r_0} 2}}\leq 
\left(a_k\phi(a_k)^{6k}\right)^{\flr{\frac {r_0} 2}}\\\leq
{a_k^{\flr{r_0/2}}} \phi(a_k)^{2kr_0}
\label{l.rainpbound}
\end{equation}
where for the last inequality we are using the fact that $\flr{\frac {r_0} 2}/r_0\geq \frac 1 3$ for all $r_0\geq 2$.  Combining lines (\ref{l.rainpbound}) and (\ref{l.rainxbound}) we get that
\begin{equation}
   P\left(A_{\ell_0,r_0}|\bigcap_{C\in \ccc}\bar C\right)\leq x_{A_{\ell_0,r_0}}\hspace{-2ex}\prod_{B\gets A_{\ell_0,r_0}}(1-x_B),
\end{equation}
and so the Lefthanded Local Lemma applies, as desired, for $c$ sufficiently large as indicated.  We conclude that Player 2 has no strategy in any finite $c$-ary sequence game to ensure $k$-repetition of blocks of length $\geq 2$ ($c$ depends on $k$ here) since the Local Lemma implies that Player 1 may win just by random play.  Thus Player 1 has a strategy in the $c$-ary sequence game to \emph{prevent} any $k$-repetitions from occurring, thus, by compactness, Player 1 has such a strategy in the infinite version of the game.

Let's examine the requirement ${C_k}\geq \binom{k}{2} \phi(a_k)^{-6k} a_k^{-1}$.  By Euler's Pentagonal Number theorem, 
\[
\phi(a_k)=\sum_{r=-\infty}^\infty (-1)^r a_k^{r(3r-1)/2}=1- a_k - a_k^2 + a_k^5+\cdots> 1-a_k - a_k^2
\]
(the last inequality holding since $a_k\leq \frac 1 k\leq \frac 1 2$). Thus, in particular, requiring 
\begin{equation}
c\geq \frac 1 2 \left(1-a_k-a_k^2\right)^{-6k}a_k^{-1}k(k-1)>C_k
\label{l.bck}
\end{equation}
 suffices to ensure a winning strategy for Player 1.  This bound is minimized for each $k$ by letting 
\begin{equation}
a_k=\frac{\sqrt{36k^2+60k+5}-(6k+1)}{24k+2}.
\label{l.ak}
\end{equation}
 In particular, for $k=2$ and $a_2=.068\dots$, we get that $C_2\leq 37$, thus Player 1 has a strategy in the infinite 37-ary sequence game to ensure that there will be no consecutive identical blocks of lengths $\geq 2$.  Asymptotically, (\ref{l.ak}) gives $a_k\sim \frac 1 {6k}$, and the substitution $a_k=\frac 1 {6k}$ in line (\ref{l.ck}) gives that $C_k\lesssim 3ek^3$. 
\end{proof}

As with our other results, it is possible to prove a biased version of Theorem \ref{t.grain}.  There is another natural direction in which to go from Theorem \ref{t.grain}, however.  The case $k=2$ of Theorem \ref{t.grain} shows that there is a sufficiently large base ($\leq 37$, in fact) for which Player 1 has a strategy in the $c$-ary sequence game to avoid the production of any consecutive identical blocks of lengths $r\geq 2$.  What about repetition of blocks of length $r=1$?  Obviously Player 2 can force the production of two consecutive identical digits, but what about three in a row?  The following theorem, stated for a game of any bias, shows this is not the case:

\begin{theorem}
  For any fixed $t\geq 1$, there is some sufficiently large integer $C_t$ (\emph{e.g.,} $C_1\leq 64$, $C_t\lesssim \frac 9 2 et^3$) such that, for any integer $c\geq C_t$, Player 1 has a strategy in the $(1:t)$ $c$-ary sequence game to ensure that there are no consecutive identical blocks $\beta_1\beta_2\dots \beta_k$ ($k\geq 2$) with total length $\sum \abs{\beta_i}\geq 2t+1$. 
\label{t.sblocks}
\end{theorem}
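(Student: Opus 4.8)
The plan is to follow the template of the proof of Theorem~\ref{t.grain}, recast in terms of \emph{periods}, using that in the $(1:t)$ game Player~1's moves occupy exactly the positions $\equiv 1\pmod{t+1}$. As always, fix a bound $M$ on the number of moves, fix an arbitrary deterministic strategy for Player~2, and let Player~1 choose each of his moves uniformly and independently from $[c]$. Since two identical blocks have equal length, a forbidden configuration is precisely a $k$-th power of some period $r$ with $k\geq 2$ and total length $kr\geq 2t+1$; and such a power contains, as a prefix, a $k_r$-th power of period $r$, where $k_r:=\max\{2,\clg{(2t+1)/r}\}$ is the least admissible multiplicity. Hence it suffices to ensure that for every $r\geq 1$ and every $\ell\geq0$ with $\ell+k_r r\leq M$ the block $\alpha_{\ell+1},\dots,\alpha_{\ell+k_r r}$ fails to have period $r$. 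Let $A_{\ell,r}$ be the event that it \emph{does}, i.e.\ that $\alpha_q=\alpha_{q-r}$ for all $\ell+r<q\leq\ell+k_r r$, and let $\aaa$ collect these. I will identify $A_{\ell,r}$ with the interval $I_{\ell,r}=[\ell+1,\ell+k_r r]$ and use the directed interval graph and the total quasi-order on right endpoints described after Theorem~\ref{t.ol} — so condition~(\ref{l.Dtop}) is automatic — and then check hypothesis~(\ref{l.cond}) of the Lefthanded Local Lemma.

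For the probability bound, fix $A_{\ell_0,r_0}$ and a family $\ccc\sbs\aaa\stm\Gamma(A_{\ell_0,r_0})$ with $C\leq A_{\ell_0,r_0}$ for all $C\in\ccc$. Each interval of $\ccc$ has right endpoint at most $\ell_0+k_{r_0}r_0$ and is disjoint from $I_{\ell_0,r_0}$, hence lies in $[1,\ell_0]$, so $\bigcap_{C\in\ccc}\bar C$ is determined by the digits in positions $\le\ell_0$, which are in turn determined by Player~1's flips there. Now reveal Player~1's flips in order of position, exactly as in the proof of Theorem~\ref{t.2game}: whenever the flip at a Player-1 position $q\in[\ell_0+r_0+1,\ell_0+k_{r_0}r_0]$ is revealed, the digit $\alpha_{q-r_0}$, the conditioning event, and all of Player~2's earlier moves are already fixed, while $\alpha_q$ is a fresh uniform value, so the conditional probability of $\alpha_q=\alpha_{q-r_0}$ is $1/c$; multiplying over all such $q$, and noting that any $(k_r-1)r$ consecutive integers contain at least $\flr{(k_r-1)r/(t+1)}$ Player-1 positions,
\[
\pp\!\left(A_{\ell_0,r_0}\,\Big|\,\bigcap_{C\in\ccc}\bar C\right)\ \le\ c^{-N_{r_0}},\qquad N_r:=\flr{\frac{(k_r-1)r}{t+1}}.
\]

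To finish I would set $x_{A_{\ell,r}}=b^{N_r}$ for a well-chosen $0<b<1$. For each fixed $r$ there are at most $\abs{I_{\ell_0,r_0}}=k_{r_0}r_0$ out-neighbours $A_{\ell,r}$ of $A_{\ell_0,r_0}$ (one per possible right endpoint), so $x_{A_{\ell_0,r_0}}\prod_{B\gets A_{\ell_0,r_0}}(1-x_B)\ge b^{N_{r_0}}\prod_{r\ge1}(1-b^{N_r})^{k_{r_0}r_0}$. Setting $D_t:=\sup_{r\ge1}k_r r/N_r$ (a short case-check gives $D_t=4t+2$) and raising the inequality $c\ge b^{-1}\prod_{r\ge1}(1-b^{N_r})^{-D_t}$ to the power $N_{r_0}$ — using $D_tN_{r_0}\ge k_{r_0}r_0$ and $0<1-b^{N_r}<1$ — reduces~(\ref{l.cond}) to exactly that inequality on $c$. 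Since $N_r=\max\{1,\flr{r/(t+1)}\}$, with the value $1$ attained by $2t+1$ values of $r$ and each value $j\ge2$ by $t+1$ values, one gets $\prod_{r\ge1}(1-b^{N_r})=(1-b)^{t}\phi(b)^{t+1}$ where $\phi(b)=\prod_{j\ge1}(1-b^j)$; so it suffices to take $c\ge b^{-1}(1-b)^{-tD_t}\phi(b)^{-(t+1)D_t}$, and a routine optimization over $b$ (bounding $\phi(b)>1-b-b^2$) yields the explicit constants in the statement. Theorem~\ref{t.ol} now gives $\pp(\bigcap_{A\in\aaa}\bar A)>0$, so Player~1's random play beats every Player~2 strategy in the $M$-move game; hence Player~1 has a winning strategy there, and by the König's-Lemma compactness argument of the proof of Theorem~\ref{t.2game}, also in the infinite game.

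The one genuinely delicate point I expect is choosing the event family so that the threshold is exactly $2t+1$: the argument needs $N_r\ge1$ for every $r\ge1$, i.e.\ $(k_r-1)r\ge t+1$, and with threshold $2t$ this fails at $r=t$, where $k_t=2$ and $(k_t-1)t=t<t+1$. Indeed, with threshold $2t$ the statement is false: for any $\ell\equiv2\pmod{t+1}$ the positions $\ell+t+1,\dots,\ell+2t$ all belong to Player~2, so he can copy $\alpha_{\ell+1},\dots,\alpha_{\ell+t}$ into them and force the period-$t$ square $\alpha_{\ell+1},\dots,\alpha_{\ell+2t}$ of total length $2t$. Apart from pinning down this event family and the bookkeeping for the explicit constant, everything else is a direct adaptation of the proof of Theorem~\ref{t.grain}.
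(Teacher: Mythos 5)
Your proof is correct, but the decomposition of bad events is genuinely different from the paper's. The paper works with two separate families: events $B_{\ell,n}$ for total lengths $2t+1\leq n\leq 3t$, where $B_{\ell,n}$ says the interval $[\ell+1,\ell+n]$ is a repetition of \emph{some} (unspecified) period $r\leq n/2$; and square-events $A_{\ell,r}$ only for large periods $r\geq\clg{(3t+1)/2}$. Each $B_{\ell,n}$ is handled by one Player-1 move dodging at most $\flr{n/2}$ forbidden symbols, giving only $\pp(B_{\ell,n})\leq\frac{3t}{2c}$; two parameters $a_t,b_t$ are then carried through, and that extra factor of $t$ in the probability is precisely what forces $C_t=O(t^3)$. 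You instead index bad events by \emph{period}: for every $r\geq1$ you take the $k_r$-th power (with $k_r=\max\{2,\clg{(2t+1)/r}\}$) as the minimal forbidden prefix and get a uniform conditional bound $c^{-N_r}$ with $N_r=\flr{(k_r-1)r/(t+1)}=\max\{1,\flr{r/(t+1)}\}\geq1$. This single-family scheme needs only one parameter $b$, and because every event now has conditional probability at most $1/c$ rather than $\Theta(t)/c$, your sufficient condition $c\geq b^{-1}\bigl((1-b)^t\phi(b)^{t+1}\bigr)^{-(4t+2)}$ actually \emph{improves} the paper's constants: it gives $C_t\lesssim 8et^2$ (take $b\approx 1/(8t^2)$) and $C_1\leq 52$ (e.g., $b=0.05$), versus the stated $\frac92 et^3$ and $64$ --- so your closing remark that you recover ``the explicit constants in the statement'' undersells what you did. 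Your sharpness example (Player~2 copying $\alpha_{\ell+1},\dots,\alpha_{\ell+t}$ for $\ell\equiv 2\pmod{t+1}$, showing the argument must break when $N_r=0$) matches the paper's remark after the theorem. The rest --- the directed interval graph, the total quasi-order on right endpoints to satisfy (\ref{l.Dtop}), the reveal-flips-in-order conditioning to verify (\ref{l.cond}), and the K\"onig compactness step --- follows the paper's template exactly.
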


On the other hand, it is not hard to see that for \emph{any} value of $c$, Player 2 can force the existence of a pair of consecutive identical blocks of length $t$ in the $(1:t)$ $c$-ary sequence game.  Thus Theorem \ref{t.sblocks} shows the sharp length threshold, for which Player 2 can force repetitions of any (strictly) smaller total length, and Player 1 can avoid any repetitions of any larger total length.

For the proof of Theorem \ref{t.sblocks}, we will again apply the ordered Local Lemma to a game of any finite length $M$, and infer the infinite version by compactness.

\begin{proof}
Let $I_{\ell,n}$ ($\ell+n\leq M$) denote the interval $[\ell+1,\ell+n]$.  As usual, we fix any strategy for Player 2, and let Player 1 choose his moves randomly from the set $[c]$.  We define two types of events.  Let $B_{\ell,n}$, $2t+1\leq n\leq 3t$ indicate the event that in the resulting sequence, the block corresponding to the interval $I_{\ell,n}$ is equal the concatenation of (at least two) consecutive identical blocks, and let $A_{\ell,r}$, $r\geq \clg{(3t+1)/2}$ denote the event that the interval $I_{\ell,2r}$ is the concatenation of exactly two identical blocks (each of length $r$).   Observe now that the event 
\[
\bigcap_{\substack{\ell\geq 1\\ 2t+1\leq n\leq 3t}} \bar B_{\ell,n} \, \cap \bigcap_{\substack{\ell\geq 1\\ r\geq (3t+1)/2}} \bar A_{\ell,r}
\]
implies that Player 1 has `won the game': there can be no consecutively repeated identical blocks of total length $\geq 2t+1$.

Note that $n\geq 2t+1$ implies that $\ell+j\equiv 1\pmod {t+1}$ for at least one $j$ in $\flr{\frac n 2}< j\leq n$.  As a consequence, at least one of the terms in the `second-half' subinterval $[\ell+\flr{\frac n 2}+1,\ell+n]$ is controlled by Player 1.  If on this turn he chooses an element from $[c]$ different from all moves made in the `first-half' subinterval $[\ell+1,\ell+\flr{\frac n 2}]$, then the event $B_{\ell,n}$ cannot occur.  Thus we have
\[
P(B_{\ell_0,n_0})\leq 1-\frac{c-\flr{\frac {n_0} 2}}{c}\leq \frac {n_0} {2c}\leq \frac {3t}{2c}.
\]
We have a much better bound on the probabilities of the events $A_{\ell,r}$, however:
\[
P(A_{\ell,r})\leq \frac{1}{c^{\flr{r_0/(t+1)}}}.
\]
Both types of events $B_{\ell,n}$ and $A_{\ell,r}$ have natural correspondences with intervals of natural numbers ($[\ell+1,\ell+n]$ and $[\ell+1,\ell+2r]$, respectively).  As usual, we let the quasi-order $\leq$ be induced by the right endpoints of these corresponding intervals, and define adjacency by letting $E_1\to E_2$ whenever $E_2\leq E_1$ and the intervals corresponding to $E_2$ and $E_1$ overlap.  We set $x_{B_{\ell,n}}=b_t$ and set $x_{A_{\ell,r}}=a_t^{\flr{r/(t+1)}}$, where $b_t,a_t\leq \frac 1 {t^2}$ are to be specified later. We have that

\diffblock{\begin{multline}
  x_{A_{\ell_0,r_0}}\prod_{B\gets A_{\ell_0,r_0}}(1-x_B)\\\geq 
a_t^{\flr{r_0/(t+1)}} \prod_{n=2t+1}^{3t}\left(1-b_t\right)^{2r_0}\prod_{r=\clg{(3t+1)/2}}^{\infty}\left(1-a_t^{\flr{r/(t+1)}}\right)^{2r_0}\\=
a_t^{\flr{r_0/(t+1)}} \left((1-b_t) \prod_{j=1}^\infty \left(1-a_t^j\right)\right)^{2(t+1)r_0}\\=
a_t^{\flr{r_0/(t+1)}}((1-b_t)\phi(a_t))^{2(t+1)}.
\end{multline}}
Thus, for the Lefthanded Local Lemma to apply, we must have
\begin{equation}
  \frac 1 {c^{\flr{r_0/(t+1)}}}\leq a_t^{\flr{r_0/(t+1)}}((1-b_t)\phi(a_t))^{2(t+1)},
\end{equation}
which holds so long as $c \geq a_t^{-1}((1-b)\phi(a_t))^{-(2t+1)(2t+2)}$ (since $r_0/\flr{r_0/(t+1)}\leq 2t+1$.)
Similarly, we have 
\diffblock{\begin{multline}
  x_{B_{\ell_0,n_0}}\prod_{B\gets B_{\ell_0,r_0}}(1-x_B)\geq 
b_t\prod_{n=2t+1}^{3t}\left(1-b_t\right)^{n_0}\prod_{r=\clg{(3t+1)/2}}^{\infty}\left(1-a_t^{\flr{r/(t+1)}}\right)^{n_0}\\=
b_t \left((1-b_t) \prod_{j=1}^\infty \left(1-a_t^j\right)\right)^{(t+1)n_0}\geq 
b_t\left((1-b_t)\phi(a_t)\right)^{3t^2+3}.
\end{multline}}
Thus for our application of the Local Lemma, we also require 
\begin{equation}
  \frac {3t}{2c}\leq b_t\left((1-b_t)\phi(a_t)\right)^{3t^2+3},
\end{equation}
which holds so long as 
$
  c\geq \frac 3 2 t b_t^{-1}\left((1-b_t)\phi(a_t)\right)^{-3t^2-3}.
$
Therefore, setting
\begin{equation}
  C_t=\max
\left\{
\begin{array}{l}
a_t^{-1}\left((1-b_t)(1-a_t-a_t^2)\right)^{-(2t+1)(2t+2)}\\
\frac 3 2 t b_t^{-1}\left((1-b_t)(1-a_t-a_t^2)\right)^{-3t^2-3},
\end{array}
\right.
\end{equation}
we have that the one-sided Local Lemma applies so long as $c\geq C_t$ and the theorem follows.

For the case $t=1$, we make the assignment $a_1=.0514$, $b_1=.0426$ (obtained by numerical optimization), which gives that the theorem holds with $C_1=64$.  For large $t$, the assignment $a_t=\frac 1 {t^{5/2}},b_t=\frac 1 {3t^2}$ gives that $C_t\lesssim \frac 9 2 et^3$.
\end{proof}

\section{Pattern avoidance}
\label{s.patt}
As discussed earlier, it is not known whether there might be some upper bound on the index of avoidable patterns.  Nevertheless, as discussed earlier, there \emph{are} finite bounds on the index of patterns without ioslated variables\cite{patgraph,bgoh}.  In this section we prove the following game-theoretic analog of those bounds.
\begin{theoremR}{t.gpattern}
For any pattern $p$ with no isolated variables, Player 1 has a strategy in the 429-ary sequence game to ensure that the sequence that results from game-play does not contain a word which matches $p$ under a partition consisting of blocks $B_i$ all of lengths $\abs {B_i}\geq 2$.
\end{theoremR}
Note that one could relax the pattern-matching condition in Theorem \ref{t.gpattern} and still get a finite bound; for example, it is sufficient if at least some constant fraction of the blocks $B_i$ which form the pattern match have lengths $\geq 2$.

For the proof, we are again applying the Lefthanded Local Lemma.  Let $p$ be a pattern $p_1p_2\dots p_k$.  We fix any strategy for Player 2 in the finite $c$-ary sequence game of length $M$, and let Player 1 play randomly against it.  As before, it is enough to show that, for sufficiently large $c$, Player 1 can win for every $M$.

We let the event $A_{\ell,n}$ ($\ell+n\leq M$) denote the event that the interval $I_{\ell,n}$ matches the pattern $p$ in the sense of Theorem \ref{t.gpattern}, and define a (total) quasi-order $\leq$ on the set $\aaa$ of events $A_{\ell,n}$ by letting $A_{\ell,n}\leq A_{\ell_0,n_0}$ whenever $\ell+n\leq \ell_0+n_0$.
Fixing the event $A_{\ell_0,n_0}$ and letting $C$ be any family of events $A_{\ell,n}\leq A_{\ell_0,n_0}$ whose corresponding intervals $I_{\ell,n}$ are disjoint from $I_{\ell_0,n_0}$, we claim that we have

\begin{equation}
P\left(A_{\ell_0,n_0}|\bigcap_{C\in \ccc}\bar C\right)
\leq \frac{2^{\flr{n_0/2}}}{c^{\flr{\clg{n_0/2}/2}}}
\leq \frac{4^{\flr{\clg{n_0/2}/2}}}{c^{\flr{\clg{n_0/2}/2}}}
\label{l.gppbound}  
\end{equation}

To get this bound, fix any partition of the interval $I_{\ell_0,n_0}$  into consecutive blocks $B_1,B_2,\dots,B_k$ each of length $\geq 2$ such that for all $1\leq i,j\leq k$, we have that $p_i=p_j$ implies that $\abs{B_i}=\abs{B_j}$.

 Fixing such a partition, the probability that the interval $I_{\ell_0,n_0}$ matches $p$ \emph{according to this partition} (\emph{i.e.}, that $p_i=p_j$ implies that $B_i=B_j$) is at most $\left(\frac 1 {c}\right)^{\flr{\clg{n_0/2}/2}}$, since any symbol occurring in $p$ occurs at least twice.  Thus we will get the bound in line (\ref{l.gppbound}) by showing that there are at most $2^{\flr{n_0/2}}$ ways of partitioning the interval $I_{\ell_0,n_0}$ into blocks whose pattern of lengths is consistent with the pattern $p$.  To see this, note that if we define an equivalence relation $\sim$ on blocks by $B_i\sim B_j$ whenever $p_i=p_j$, then a partitioning of the interval $I_{\ell_0,n_0}$ consistent with the pattern $p$ is determined by the choice of the lengths of the blocks in each equivalence class.  These lengths must sum to at most $\flr{n_0/2}$ (since each class contains at least two blocks), giving us the bound $2^{\flr{n_0/2}}$ on the number of possible partitions of $I_{\ell_0,n_0}$.  

\bigskip

Fixing $I_{\ell_0,n_0}$ and $n$, there are at most $n_0$ intervals $I_{\ell,n}\leq I_{\ell_0,n_0}$ which intersect $I_{\ell_0,n_0}$.  
Set $x_{A_{\ell,n}}=x_n=b^{\flr{\clg{\frac n 2}/2}}$, where $b\leq \frac 1 2$ is a constant to be optimized ($b=\frac 1 2$ would sufficient for the argument to give a finite bound).  We have
\diffblock{\begin{multline}
  x_{A_{\ell_0,r_0}}\hspace{-2ex}\prod_{B\gets A_{\ell_0,r_0}}\hspace{-2ex}(1-x_B)\geq 
b^{\flr{\clg{n_0/2}/2}}\prod_{n=4}^{\infty}\left(1-b^{\flr{\clg{n/2}/2}}\right)^{n_0}\hspace{-1ex}\\=
b^{\flr{\clg{n_0/2}/2}}\frac{1}{(1-b)^{n_0}}\prod_{j=1}^{\infty}\left(1-b^{j}\right)^{4n_0}\hspace{-1ex}
\label{l.gparbound}
\end{multline}}
Let now
\begin{equation}
C=\frac{8(1-b)^6}{b\phi(b)^{24}},
\label{l.gparC}
\end{equation}
where, as before, $\phi(a)=\prod_{j=1}^\infty \left(1-a^j\right)$ is Euler's q-series for $q=a$.  
For any integer $c\geq C$, this gives
\diffblock{\begin{multline}
 P\left(A_{\ell_0,r_0}|\bigcap_{C\in \ccc}\bar C\right)\leq 
\frac{2^{\flr{n_0/2}}}{c^{\flr{\clg{n_0/2}/2}}}\leq
\left(\frac{8}{c}\right)^{\flr{\clg{n_0/2}/2}}\hspace{-1ex}\\\leq
\left(\frac{b\phi(b)^{24}}{(1-b)^6} \right)^{\flr{\clg{n_0/2}/2}} \leq
b^{\flr{\clg{n_0/2}/2}}\frac{\phi(b)^{4n_0}}{(1-b)^{n_0}}.\\
\label{l.parxbound}
\end{multline}}
Combining this with (\ref{l.gparbound}), we get that 
\begin{equation}
   P\left(A_{\ell_0,r_0}|\bigcap_{C\in \ccc}\bar C\right)\leq 
  x_{A_{\ell_0,r_0}}\hspace{-2ex}\prod_{B\gets A_{\ell_0,r_0}}\hspace{-2ex}(1-x_B)
\end{equation}
and so the Local Lemma applies.
Under optimization of $b$, we get the theorem with $C=429$ (with $b=.045$, for example).

\section{Further Questions}
\label{s.qs}
There are many natural questions raised by what we have done here.  For example, regarding Theorem \ref{t.grain}:
\begin{q}
  What is the minimum $c$ for which Player 1 has a strategy in the $c$-ary sequence game to ensure that there are no consecutive identical blocks of any length $r\geq 2$ in the resulting $c$-ary sequence?
\label{q.cc}
\end{q}
We have proved an upper bound of 37. Grytczuk, Przyby\l{}o, and Zhu have used restricted sampling techniques to push the Lefthanded Local Lemma to give near optimal results for the Thue choice number  \cite{Tchoice} (the list-chromatic analog to nonrepetitive colorings), and it seems likely that some of their method could be used to decrease our upper bound of 37.  On the other hand, unlike in the problem they consider, it seems unlikely that this alone could get close to closing the gap between the bounds in our case.

Apart from decreasing the upper bound, there is the problem of lower bounds. If Player 2's strategy is to always take the digit 0, then any square in the sequence of just Player 1's moves will cause him to lose; this implies a lower bound of 3.  In fact, there is a simple strategy for Player 2 which shows that 3 is not the truth either:
\begin{theorem}
  Player 2 has a strategy in the 3-ary sequence game to ensure the production of consecutive identical blocks of length $\geq 2$.
\label{t.cl}
\end{theorem}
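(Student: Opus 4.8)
As usual it is enough to force a win in a finite number of moves, after which compactness (equivalently, finiteness of the forced part of the game) gives the infinite version. Write $P_1,P_2,\dots$ for Player~1's moves (the odd-indexed terms $\alpha_1,\alpha_3,\dots$) and $Q_1,Q_2,\dots$ for Player~2's moves. Two structural facts drive everything. First, Player~1 can never repeat his previous move: if $P_i=P_{i-1}$ then on move $2i$ Player~2 plays $Q_i:=Q_{i-1}$ and completes the period-$2$ square $\alpha_{2i-3}\alpha_{2i-2}\alpha_{2i-1}\alpha_{2i}=(P_{i-1}Q_{i-1})(P_iQ_i)$; so after each of his moves Player~1 has at most the two symbols distinct from his last one available. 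Second, at his move $2i$ Player~2 wins immediately whenever $\alpha_1\cdots\alpha_{2i-1}$ has a suffix of the form $XcX$ with $|X|\ge 1$: playing $Q_i:=c$ produces the period-$(|X|+1)$ square $XcXc$. Hence after \emph{every} one of his moves Player~1 must avoid leaving such a suffix, which over only three symbols is a severe restriction.

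Player~2's strategy is a ``shadowing'' strategy: whenever an immediate winning move (as in the second fact) is available he takes it; otherwise he copies one of Player~1's recent moves with a fixed small delay (for instance $Q_i:=P_{i-1}$, the exact delay to be pinned down by the analysis). The point of the shadowing is to manufacture equalities among the $\alpha_j$: with $Q_i:=P_{i-1}$, for example, the word $\alpha_{2i-1}\cdots\alpha_{2i+4}$ equals $(P_iP_{i-1}P_{i+1})(P_iP_{i+2}P_{i+1})$, which is a period-$3$ square precisely when $P_{i+2}=P_{i-1}$, so Player~1 is forced to obey $P_j\neq P_{j-3}$ on top of square-freeness. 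Combined with the ``no $XcX$-suffix'' requirement, each of Player~1's moves becomes either outright forced or reduced to a single meaningful branch, while the two facts above keep eliminating options.

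The proof is then a finite case analysis of the game tree. Fix Player~1's first move to be $0$ (legitimate since we may relabel the three symbols), follow the forced replies, and branch at the few nodes where Player~1 retains a genuine choice; the residual color symmetry prunes the tree further. One checks that along every branch Player~1 reaches, within a bounded number of moves, a position in which each of his two a~priori legal symbols is separately forbidden---one by an $XcX$-suffix threat and the other by a square ending at his own position---so he has no move at all and Player~2 has already produced a square of period $\ge 2$.

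The hard part will be the case analysis: simultaneously (i) identifying the precise delay (or short menu of fallback moves) for which the shadowing strategy provably closes off every escape of Player~1, and (ii) presenting the verification in a transparent rather than brute-force way. The two structural facts above are what keep the tree finite and, ultimately, make the bookkeeping manageable.
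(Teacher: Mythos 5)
Your two structural observations are correct and would be useful in any proof of this theorem: Player~1 cannot repeat his previous move against any reasonable Player~2, and a suffix of the form $XcX$ is an immediate loss for Player~1. However, what you have written is a plan for a proof rather than a proof. You explicitly leave the strategy unspecified ("the exact delay to be pinned down by the analysis"), and the case analysis --- which you yourself flag as "the hard part" --- is never carried out. As written it is not established that your shadowing strategy (with immediate-win override) actually boxes Player~1 in within finitely many moves; a reader has no way to check that the game tree really closes off, because no branch is actually followed to a terminal position.

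The paper's proof takes a different and cleaner route, and it is worth seeing why it avoids the difficulty you flagged. Rather than shadowing, Player~2 plays the fully deterministic \emph{local} rule $Q_i := P_i + 1 \pmod 3$: always reply to Player~1's symbol $a$ with $a+1$. This rule has nothing conditional in it (no "unless there is an immediate win") and depends only on Player~1's most recent move, so the game state compresses dramatically. Player~1 still cannot repeat (else a period-$2$ square such as $1212$ appears), so each of his moves after the first is an increment or a decrement of his previous move, encoded as $+$ or $-$. One then observes that the patterns $--$ and $+-+$ each create a period-$2$ or period-$3$ square, so any occurrence of $+-$ is fatal within one more move; and $+++++$ creates a period-$6$ square. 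The only sequence of choices avoiding all of these is an initial segment of $-++++-$, which lasts $14$ moves. The point is that the deterministic rule turns the whole game into a question about $\{+,-\}$-strings with three short forbidden factors, which is a transparent finite check --- precisely the "transparent rather than brute-force" verification you hoped for but did not supply. If you want to salvage your shadowing approach, you would need to actually fix the delay, write down the resulting game tree, and exhibit a losing terminal position on every branch; until then there is a genuine gap.
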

\begin{proof}
  Player 2's strategy is very simple: after any move by Player 1 on which he chooses the symbol $a\in \{0,1,2\}$, Player 2 chooses $a+1\pmod 3$.  Fixing this strategy for Player 2, observe a few things.  Player 1 can never make the same move twice in a row, otherwise he will lose (e.g.,, 1212).  Thus, on any turn, he has essentially two choices, whether to choose the symbol $p+1\pmod 3$ or $p-1\pmod 3$, where $p$ is the choice he made on his previous move.  We can thus describe the outcome of any game by a sequence of $+$'s and $-$'s representing which choice was made by Player 1 on each turn---without loss of generality, we assume his first move to be the symbol $0$.  For example, the game $0112201220$ corresponds to the sequence of choices $++-+$.  Note that the subsequences $--$ and $+-+$ both correspond to squares of nonsingleton blocks (e.g., $\underbracket[.1pt]{012}\underbracket[.1pt]{012}$, and $\underbracket[.1pt]{0112}\underbracket[.1pt]{0112}$, respectively).  Consequently, if the sequence of choices by Player 1 includes $+-$ as a subsequence, he will lose by the next turn.  Coupled with the fact that the sequence $+++++$ corresponds to a square (e.g., $\underbracket[.1pt]{011220}\underbracket[.1pt]{011220}$), this implies that the longest sequence of choices for Player 1 which does not produce consecutive identical blocks of length $\geq 2$ for this fixed strategy of Player 2 is $-++++-$, corresponding to a game with a total of 14 moves.
\end{proof}
A computer search has shown that the strategy described above for Player 2 is optimal, in the sense that Player 1 has a strategy which ensures that he will always survive until the 16th move of the game.

One can make many interesting modifications of Question \ref{q.cc}.  For example, what if we bias the game by restricting Player 2 to a subset of the symbols available to Player 1?  There does not seem to be an obvious way to take advantage of this kind of extra restriction in Local-Lemma based upper bounds.  And on the other side of things, the proof of Theorem \ref{t.cl} no longer works with this kind of restriction; in particular, in the case where Player 1 can choose from the symbols 0,1,2, and Player 2 can choose from the symbols 0,1, we have not ruled out the possibility that Player 1 can avoid the production of squares of nonsingletons indefinitely.

  There is, of course, the natural question coming from Theorem \ref{t.sblocks}:
\begin{q}
  What is the minimum $c$ required so that Player 1 has a strategy in the $c$-ary sequence game to avoid the appearance of consecutive identical blocks of total length $\geq 3$?
\end{q}
Our upper bound for this question is 64.  It is easy to see that 4 is a lower bound (give Player 2 the strategy of always choosing the digit 0, for example).

Another obvious direction of inquiry concerns the nonconstructive nature of our proofs.  For example, regarding Theorem \ref{t.grain}:
\begin{q}
  Give an explicit strategy for Player 1 to avoid consecutive repetition of blocks of length $\geq 2$ in the $c$-ary sequence game for some $c$.
\label{q.cgrain}
\end{q}
Note that Question \ref{q.cgrain} is not just interesting from the standpoint of proof techniques: the strategies guaranteed to exist by our theorems need not have finite description, thus it is natural to wonder if finite strategies do exist for these games.   We \emph{can} give explicit strategies for Player 1 in the situation where he always is allowed to know Player 2's next move in advance:
\begin{theorem}
  If Player 1 can always know Player 2's next move in advance, he has a strategy (with a finite description) in the $16$-ary sequence game which avoids any consecutive repetition of blocks of length $\geq 2$.  In the 48-ary sequence game, he has a strategy to avoid consecutive repetition altogether (even of blocks of length 1).
\end{theorem}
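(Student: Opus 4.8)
The plan is to first pass to a finite game, exactly as in the proofs of Theorems~\ref{t.2game} and~\ref{t.sblocks}: if Player~1 has a finite-description winning rule for every game of $M$ moves, then since his move tree is finitely branching --- he has only $c$ choices on each turn, whether or not he looks ahead --- a K\"onig's-lemma argument produces a winning rule for the infinite game. (Being able to see Player~2's next move changes nothing here, since that move is still one of finitely many.) So it suffices to exhibit an explicit rule in the finite game, and I would phrase the rest of the argument as maintaining a handful of invariants turn-by-turn.

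I would split the requirement ``no square of block-length $\geq 2$'' according to the parity of the block-length. For even block-lengths the lookahead is not needed: it is enough that the subsequence $\epsilon_1,\epsilon_3,\epsilon_5,\dots$ of Player~1's own moves be square-free, since a square $BB$ of block-length $2t$ starting anywhere, restricted to the $2t$ positions of the parity Player~1 owns, becomes a square of block-length $t\geq 1$ in that subsequence. So Player~1 commits in advance to a fixed square-free ternary word and plays its $i$-th letter on his $i$-th turn --- this is the ``finite description'' part. For odd block-lengths $n=2m+1\geq 3$ I would use the following structural description, which is the real content: a square $BB$ starting at position $k+1$ occurs exactly when, writing $u$ for the length-$n$ word occupying the positions in $[k+1,k+2n]$ belonging to Player~1 and $v$ for the corresponding word on Player~2's positions, $v$ is the cyclic rotation of $u$ by $m+1$ places (note $\gcd(m+1,2m+1)=1$, so this is a full rotation; the case $k$ odd is the same up to swapping the roles of $u$ and $v$ and adjusting the rotation amount). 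Equivalently, the two players place ``crosswise equal'' letters throughout such a window, so each letter Player~1 places in the second half of such a window is matched against a letter Player~2 placed earlier --- which Player~1 knows by his one-move lookahead. So Player~1 reserves a second coordinate of his $16=4\times 4$ alphabet purely to sabotage odd-length windows: on each turn he chooses that coordinate to differ from the (known) corresponding coordinate of every opponent-letter that a still-possible odd-length window would force him to copy.

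To upgrade to the $48$-ary statement (no consecutive repetition of any block-length), I would additionally forbid $\alpha_i=\alpha_{i+1}$ everywhere by adjoining a third coordinate ranging over a $3$-element set. When Player~1 plays a letter he knows its left neighbour (a past move of Player~2) and, by the lookahead, its right neighbour (Player~2's next move), so he must avoid at most two values of this coordinate, and $3>2$ leaves him a legal choice. A sequence with no equal adjacent pair and no square of block-length $\geq 2$ is square-free, giving the claim with base $4\times 4\times 3=48$. The explicit rule is then: fixed ternary word in coordinate one; coordinates two and three set by the finite sabotage/avoidance rules above, applied to the observed history together with the one-step lookahead.

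The step I expect to be the main obstacle is bounding the sabotage alphabet by an absolute constant --- i.e., showing that on any single turn only boundedly many odd-length windows can be still possible and thereby threaten Player~1 with a forced copy. This is precisely what makes the bases constants (and, after optimization, the claimed $16$ and $48$): one needs that square-freeness of Player~1's own subsequence, together with the rigid way in which overlapping cyclic-rotation relations can coexist within a single opponent sequence, limits the number of simultaneously-live windows. I would establish this by a combinatorics-on-words estimate in the spirit of bounds on the number of near-repetitions ending at a given position, and then optimize the three coordinate sizes. Two smaller points also require care: verifying that windows beginning at positions of either parity are handled symmetrically, and that the three coordinate-rules never conflict with one another.
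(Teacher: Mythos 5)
Your outline and the paper's proof diverge fundamentally, and the gap you flag is real and, in the form you propose, very unlikely to close. You split by parity and try to sabotage odd-length squares one at a time via a dedicated coordinate, hoping that only a bounded number of ``live'' odd-length windows threaten any single move. There is no obvious reason for such a bound: the number of odd block-lengths $n$ with a potential square ending at position $p$ grows linearly in $p$, and a single Player~1 move can only exclude a constant number of values. More tellingly, your sabotage step does not actually use the lookahead: when Player~1 sits at a position $j$ in the \emph{second} half of a candidate window of odd block-length $n$, the matched position $j-n$ lies strictly in the past, so $c_{j-n}$ is already known without seeing Player~2's next move. If your parity-split scheme went through, it would give an explicit \emph{lookahead-free} strategy---strictly stronger than the theorem claims and than anything in the paper---which is a strong hint that the missing combinatorics-on-words bound is where the plan breaks.

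The paper's proof avoids all of this with one transformation. Write each $16$-ary symbol as a pair $(a_j,b_j)\in\Z_4^2$ and set $\alpha_j=a_j-b_{j-1}\pmod 4$. When Player~1 moves at even $j$ he chooses $a_j$ (using the known past value $b_{j-1}$) to fix $\alpha_j$, and he chooses $b_j$ (using the \emph{lookahead} value $a_{j+1}$) to fix $\alpha_{j+1}$; this is the one place lookahead is genuinely used, and it lets him make $\{\alpha_j\}$ be any prescribed $4$-ary sequence, handling both parities of block-length at once. Now a square of block-length $m$ in the game sequence forces $\alpha_{k+2}\cdots\alpha_{k+m}$ and $\alpha_{k+m+2}\cdots\alpha_{k+2m}$ to be identical length-$(m-1)$ blocks separated by a single term; choosing $\{\alpha_j\}$ to be Thue's $4$-ary sequence, in which any two identical blocks are separated by at least two symbols, rules this out for all $m\ge 2$. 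Your third ``adjacency'' coordinate over $\Z_3$ for the $48$-ary statement coincides with the paper's, and that part of your argument is fine; but the core $16$-ary step needs the $\alpha_j$-reduction to Thue's separation theorem, not a parity-split sabotage scheme.
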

\noindent (Note that a Local-Lemma based argument can also show the existence of a strategy to avoid \emph{all} consecutive repetitions in the case where Player 1 always knows Player 2's next move.)
\begin{proof}
  For the first part of the theorem, we will let the $16$-ary sequence game be played using the 16 symbols $(a,b)$, $0\leq a,b\leq 3$.  Thus a play of the game results in a sequence of pairs $(a_0,b_0),(a_1,b_1),(a_2,b_2),(a_3,b_3),\dots,$ with $0\leq a_k,b_k\leq 3$ for all $k$.  Since Player 1 always knows Player 2's next move in advance, he can make his moves so that the sequence $\alpha_j=a_{j}-b_{j-1} \pmod 4$ is any sequence over $\Z_4$ of his choosing.  (Observe that, in the usual case where he cannot see future moves of Player 2, he could only control the even terms of this sequence.)  Observe now that if there is a square in the sequence resulting from the game:
\[
\dots,(a_{k+1},b_{k+1}),\dots,(a_{k+m},b_{k+m}),(a_{k+m+1},b_{k+m+1}),\dots,(a_{k+2m},b_{k+2m}),\dots,
\]
where $(a_{k+m+\ell},b_{k+m+\ell})=(a_{k+\ell},b_{k+\ell})$ for all $1\leq \ell\leq m$, then we have that the blocks $[\alpha_{k+2}, \alpha_{k+3},\dots,\alpha_{k+m}]$ and $[\alpha_{k+m+2}, \alpha_{k+m+3},\dots,\alpha_{k+2m}]$ from the sequence $\{a_j\}$ are identical.  Note that this is not a square, as the blocks are separated by the element $\alpha_{k+m+1}$.  However, Thue constructed in \cite{t2} a sequence over 4 symbols in which any identical blocks are separated by at least 2 symbols. (In general, he constructed $c$-ary sequences where identical blocks are separated by at least $c-2$ symbols, a result greatly improved upon by Dejean's conjecture for the values for which it has been confirmed). By making his moves so that $\{\alpha_j\}$ will be this sequence constructed by Thue, Player 1 can avoid any repetition of consecutive blocks of lengths at least 2 in the game.

To avoid all repetition in the 48-ary sequence game, Player 1 considers the symbols to consist of three classes of pairs $(a,b)$, $0\leq a,b\leq 3$, and plays as above, but simultaneously ensuring that no consecutive symbols are from the same class.
\end{proof}
Regarding Question \ref{q.cgrain}, it may seem ambitious to hope for explicit strategies in cases like the generalization of Beck's theorem (Theorem \ref{t.2game}), where no explicit construction is known even without the presence of the game. For the question on nonrepetitive $c$-ary sequences, however, the question seems quite natural.

\begin{q}
  Which patterns can Player 1 avoid in $c$-ary sequence games?
\end{q}
Recall that for games, pattern matching only counts if the matching partition contains only blocks of lengths $\geq 2$ (so, for example, Theorem \ref{t.grain} implies that Player 1 can avoid the pattern $xx$ in the $37$-ary game).  With this interpretation of what it means to `avoid' a pattern in a sequence game, one intriguing possibility is that the set of unavoidable patterns for sequence games is the same as the set of unavoidable patterns for sequences, which, as discussed in Section \ref{s.patt}, were characterized by Zimin and Bean et al. \cite{Zunav,bean}.
Note that the restriction that substituted words have length $\geq 2$ is not so unnatural, since in the game-free case, restrictions like this do not affect the set of unavoidable words (although they do affect the smallest base at which one can avoid a pattern).  Note that there is another natural question on patterns and games: is there some upper bound on the `game-avoidability index' of avoidable patterns?

This is similar to a question of Grytczuk in \cite{patgraph} regarding patterns avoidable on graphs of bounded maximum degree.  In that case also, all patterns without isolated variables are known to be avoidable.

\bibliographystyle{abbrv}

\end{document}